\DeclareFontFamily{U}{rsfs}{%
\skewchar\font127}
\DeclareFontShape{U}{rsfs}{m}{n}{%
<-6>rsfs5<6-8.5>rsfs7<8.5->rsfs10}{}
\DeclareSymbolFont{rsfs}{U}{rsfs}{m}{n}
\DeclareRobustCommand*\rsfs{%
\@fontswitch\relax\mathrsfs}
\theoremstyle{plain}
\newtheorem{thm}{Theorem}[section]
\newtheorem{prop}[thm]{Proposition}
\newtheorem{lem}[thm]{Lemma}
\newtheorem{rmk}[thm]{Remark}
\newtheorem{prop-defi}[thm]{Proposition-Definition}
\newtheorem{thm-defi}[thm]{Theorem-Definition}
\newtheorem{lem-defi}[thm]{Lemma-Definition}
\newtheorem{cor-defi}[thm]{Corollary-Definition}
\newtheorem{conj}[thm]{Conjecture}
\newdimen\argwidth
\def\db[#1\db]{
 \setbox0=\hbox{$#1$}\argwidth=\wd0
 \setbox0=\hbox{$\left[\box0\right]$}
  \advance\argwidth by -\wd0
 \left[\kern.3\argwidth\box0 \kern.3\argwidth\right]}
\newcommand{\eE}{\mathcal{E}}
\newcommand{\hH}{\mathcal{H}}
\newcommand{\lL}{\mathcal{L}}
\newcommand{\oO}{\mathcal{O}}
\newcommand{\Hom}{\mathop{\rm Hom}\nolimits}
\newcommand{\dR}{\mathbf{R}}
\newcommand{\Pic}{\mathop{\rm Pic}\nolimits}
\newcommand{\ch}{\mathop{\rm ch}\nolimits}
\newcommand{\Ext}{\mathop{\rm Ext}\nolimits}
\newcommand{\Coh}{\mathop{\rm Coh}\nolimits}
\newcommand{\ev}{\mathop{\rm ev}\nolimits}
\newcommand{\cneq}{\mathrel{\raise.095ex\hbox{:}\mkern-4.2mu=}}
\newcommand{\eqcn}{\mathrel{=\mkern-4.5mu\raise.095ex\hbox{:}}}
\newcommand{\DT}{\mathop{\rm DT}\nolimits}
\newcommand{\RHom}{\mathop{\dR\mathrm{Hom}}\nolimits}
\begin{document}
\title{Counting conics on sextic 4-folds}
\author{Yalong Cao}
\address{Kavli Institute for the Physics and Mathematics of the Universe (WPI),The University of Tokyo Institutes for Advanced Study, The University of Tokyo, Kashiwa, Chiba 277-8583, Japan}
\email{yalong.cao@ipmu.jp}

\maketitle
\begin{abstract}
We study rational curves of degree two on a smooth sextic 4-fold and their counting invariant defined using Donaldson-Thomas theory of 
Calabi-Yau 4-folds. By comparing it with the corresponding Gromov-Witten invariant, we verify a conjectural relation between them proposed by the author, Maulik and Toda.
\end{abstract}


\section{Introduction}
Let $X$ be a smooth sextic 4-fold in $\mathbb{P}^{5}$ and $M_{\beta}$ be the moduli scheme of one dimensional stable sheaves on $X$ with Chern character $(0,0,0,\beta,1)$. 
We are interested in the counting invariant of $M_{\beta}$ defined using Donaldson-Thomas theory of Calabi-Yau 4-folds, introduced in \cite{BJ, CL}. In particular, there exists a virtual class
\begin{equation}[M_{\beta}]^{\mathrm{vir}}\in H_2(M_{\beta},\mathbb{Z}). \nonumber \end{equation}
And we may use insertions to define counting invariants: for a class $\gamma\in H^4(X, \mathbb{Z})$, let
\begin{equation}
\tau(\gamma):=\pi_{M\ast}(\pi_X^{\ast}\gamma \cup\ch_3(\eE))\in H^2(M_{\beta},\mathbb{Z}),  \nonumber \end{equation}
where $\pi_X$, $\pi_M$ are projections from $X \times M_{\beta}$
onto corresponding factors and $\ch_3(\eE)$ is the
Poincar\'e dual to the fundamental class of the universal sheaf $\mathcal{E}$. 

The degree matches and we define $\DT_4$ invariants as follows
\begin{align*}
\mathrm{DT}_{4}(\beta\textrm{ }| \textrm{ } \gamma):=
\int_{[M_{\beta}]^{\rm{vir}}}\tau(\gamma) \in\mathbb{Z}.
\end{align*}
Since the definition of the virtual class involves a choice of orientation on certain (real) line bundle over $M_{\beta}$, the invariant will also depend
on that (see Sect. 2.1 for more detail).

Another obvious way to enumerating curves on $X$ is by GW theory. More specifically, for $\gamma\in H^{4}(X,\mathbb{Z})$, 
the genus 0 Gromov-Witten invariant of $X$ is 
\begin{equation}
\mathrm{GW}_{0, \beta}(\gamma):=\int_{[\overline{M}_{0, 1}(X, \beta)]^{\rm{vir}}}\mathrm{ev}^{\ast}(\gamma)\in\mathbb{Q},
\nonumber \end{equation}
where $\mathrm{ev} \colon \overline{M}_{0, 1}(X, \beta)\to X$ is the evaluation map.

In a previous work \cite{CMT}, the author, Maulik and Toda proposed a conjectural relation between $\DT_4$ invariants for one dimensional 
stable sheaves on $X$ and genus zero $\mathrm{GW}$ invariants of $X$ (see Conjecture \ref{conj:GW/GV} for details). 
The main result of this note is to verify this conjecture for degree two curve class on a smooth sextic 4-fold.
\begin{thm}(Theorem \ref{GW/DT4})
Conjecture \ref{conj:GW/GV} is true for degree one and two classes of a smooth sextic 4-fold $X\subseteq \mathbb{P}^{5}$, i.e.
for the line class $l\in H_2(X,\mathbb{Z})$ and any $\gamma\in H^{4}(X)$, 
we have 
\begin{equation}\mathrm{GW}_{0, l}(\gamma)=\mathrm{DT}_{4}(l \textrm{ }| \textrm{ } \gamma), \nonumber \end{equation} 
\begin{equation}\mathrm{GW}_{0, 2l}(\gamma)=\mathrm{DT}_{4}(2l \textrm{ }| \textrm{ } \gamma)+
\frac{1}{4}\cdot\mathrm{DT}_{4}(l\textrm{ }| \textrm{ } \gamma),  \nonumber \end{equation} 
for certain choice of orientation in defining the RHS.
\end{thm}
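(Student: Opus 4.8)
The plan is to compute both sides of the conjectural identity explicitly for the sextic 4-fold in degrees $l$ and $2l$, using the fact that rational curves of low degree on $X$ are very concrete objects. First I would describe the moduli spaces geometrically. A line in $\mathbb{P}^5$ meets the sextic $X$ in six points unless it lies on $X$; so the Fano variety of lines $F_1(X)$ is cut out inside the Grassmannian $\mathrm{Gr}(2,6)$ by a section of $\Sym^6 \mathcal{U}^\vee$ (rank $7$, equal to $\dim \mathrm{Gr}(2,6)=8$ minus... here one checks the expected dimension is $8-7=1$), so $F_1(X)$ is a smooth curve for generic $X$, and for \emph{any} smooth $X$ it carries a natural virtual class. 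Similarly, conics on $X$ are parametrized by a space $F_2(X)$ sitting over $\mathrm{Gr}(3,6)$ (planes in $\mathbb{P}^5$), where in each plane $\mathbb{P}^2$ the conics through which $X$ cuts form a $\mathbb{P}^5$ and the condition ``$C\subset X$'' is a section of a bundle of rank equal to $\dim$ of the space of degree $6$ forms on $\mathbb{P}^2$ restricted appropriately; I would pin down the expected dimension and identify the obstruction bundle. For degree one, $M_l$ is identified with $F_1(X)$ (a stable sheaf with that Chern character supported on a line $L$ is $\oO_L(-1)$ pushed forward), and for degree two $M_{2l}$ decomposes according to whether the support is a smooth conic, a pair of (possibly coincident) lines, or a line with an embedded/thickened point — exactly the stratification appearing on the Gromov–Witten side between $\overline{M}_{0,1}(X,2l)$ and its boundary. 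The coefficient $\tfrac14$ in the statement is the tell-tale sign of the double cover $\overline{M}_{0,1}(X,2l)\dashrightarrow \overline{M}_{0,1}(X,l)$ together with the automorphism factor, matching the general GW/GV-type formula of \cite{CMT}.

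Next I would set up the $\DT_4$ side. Using the Behrend–Jiang / Cao–Leung construction recalled in Section 2.1, the virtual class $[M_\beta]^{\mathrm{vir}}$ for $\beta=l$ lives on a (virtually) $1$-dimensional space and pairs against the degree-$2$ insertion $\tau(\gamma)$; I would compute $\tau(\gamma)$ via $\ch_3(\eE)$ for the universal sheaf, reducing $\mathrm{DT}_4(l\mid\gamma)$ to an integral over $F_1(X)$ of the form $\int_{[F_1(X)]^{\mathrm{vir}}} (\text{linear in }\gamma)$, i.e. a virtual count of lines meeting the $4$-cycle dual to $\gamma$. The orientation issue is handled by fixing the choice so that the real line bundle contributes $+1$ consistently on each component; I would invoke the comparison results for orientations on these moduli spaces (and note the statement only claims the identity ``for certain choice of orientation''). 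For $\beta=2l$, the same reduction expresses $\mathrm{DT}_4(2l\mid\gamma)$ as a virtual integral over $M_{2l}$; the key local computation is the contribution of the ``double line'' locus, where the $\DT_4$ obstruction theory sees the normal bundle $N_{L/X}$ (rank $3$) and its symmetric square, and one must extract how the insertion $\tau(\gamma)$ couples to it — this is where the factor involving $\mathrm{DT}_4(l\mid\gamma)$ is produced.

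On the Gromov–Witten side I would compute $\mathrm{GW}_{0,l}(\gamma)$ and $\mathrm{GW}_{0,2l}(\gamma)$ directly. For $\beta=l$ the stable map space $\overline{M}_{0,1}(X,l)$ maps to $F_1(X)\times X$, the virtual class is (up to the obstruction bundle already identified) the product of $[F_1(X)]^{\mathrm{vir}}$ with the incidence condition, so $\mathrm{GW}_{0,l}(\gamma)$ equals the same virtual line count as $\mathrm{DT}_4(l\mid\gamma)$ — giving the first identity. For $\beta=2l$ the space $\overline{M}_{0,1}(X,2l)$ has a component of maps with irreducible smooth-conic image, a component of maps whose image is a line traversed twice (degree-$2$ covers $\mathbb{P}^1\to\mathbb{P}^1$, forming a family over $F_1(X)$), and boundary components of nodal maps (two lines, or a line plus a contracted bubble carrying the marked point). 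I would compute the multiple-cover contribution via the standard Aspinwall–Morrison / local $\mathbb{P}^1$ analysis adapted to the rank-$3$ normal bundle on $X$, which is what supplies the rational coefficient, and check that the smooth-conic and nodal contributions match the corresponding strata of $M_{2l}$ stratum by stratum.

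The main obstacle I expect is the multiple-cover / double-line comparison: showing that the ``excess'' contribution of the non-reduced (thickened) line locus to $\mathrm{DT}_4(2l\mid\gamma)$ together with the degree-$2$-cover contribution to $\mathrm{GW}_{0,2l}(\gamma)$ differ by exactly $\tfrac14\,\mathrm{DT}_4(l\mid\gamma)$ with the correct sign. This requires a careful local model for both obstruction theories near a line $L$ with $N_{L/X}=\oO_L(a_1)\oplus\oO_L(a_2)\oplus\oO_L(a_3)$ (with $\sum a_i$ fixed by $K_X|_L$), a torus-equivariant computation of the relevant Euler classes on the $\DT_4$ side (including the subtle square-root Euler class / orientation sign), and matching it to the genus-$0$ local Gromov–Witten vertex. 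Everything else — the geometry of $F_1(X)$ and $F_2(X)$, the reduction of the insertions via $\ch_3(\eE)$, the dimension bookkeeping — is routine once the strata are identified; the rational-coefficient local term is the crux.
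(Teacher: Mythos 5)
Your overall architecture (identify the Fano/Hilbert schemes of lines and conics, reduce the insertions to incidence conditions, split $\overline{M}_{0,1}(X,2l)$ into an embedded-conic part and a double-cover part) matches the paper's strategy, but there is a genuine misdirection in where you locate the source of the coefficient $\tfrac14$, and it would send you into a computation that is both unnecessary and ill-posed. You propose that $M_{2l}$ contains a stratum of sheaves supported on non-reduced (thickened) lines, and that the ``excess'' contribution of this stratum to $\mathrm{DT}_4(2l\mid\gamma)$ must be matched against the double-cover contribution to $\mathrm{GW}_{0,2l}(\gamma)$ via a local model for $N_{L/X}$ of general splitting type. The paper's argument is the opposite: after invoking deformation invariance of both sides to pass to a \emph{generic} sextic, one shows (Proposition \ref{not contain double line}) that $X$ contains \emph{no} double lines at all and only finitely many broken conics, and (Proposition \ref{virtual class for one dim sheaves}) that every stable sheaf in $M_{2l}$ is $\oO_C$ for a smooth or broken conic $C$, so that $M_{2l}\cong I_1(X,2)$ is a smooth curve and $[M_{2l}]^{\mathrm{vir}}=[M_{2l}]$. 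There is no thickened-line locus and no excess contribution on the $\DT_4$ side; the entire $\tfrac14$ arises on the Gromov--Witten side from the component $\overline{M}_{0,1}(X,2l)_{\mathrm{cov}}\cong\overline{M}_{0,1}(\mathbb{P}^1,2l)\times\overline{M}_{0,0}(X,l)$, where the normal bundle of a line is $\oO(-1)\oplus\oO(-1)\oplus\oO(0)$ for generic $X$, the obstruction bundle is the standard rank-two Aspinwall--Morrison bundle, and $\int e(Ob)=\tfrac1{8}$ combines with the degree-$2$ evaluation map to give $2\cdot\tfrac18=\tfrac14$. Your plan of a torus-equivariant square-root Euler class computation near a thickened line is not needed and, for generic $X$, would be a computation over the empty set.

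Two smaller points. First, the stable sheaf in class $(0,0,0,l,1)$ supported on a line $L$ is $\oO_L$ (with $\chi=1$), not $\oO_L(-1)$; the identification $M_\beta\cong I_1(X,\deg\beta)$ via $E\cong\oO_C$ is itself a step that requires proof (one uses $\chi(E)=1$ to produce a section and stability to show it is surjective, including the case of broken conics). Second, you should state explicitly that the reduction to generic $X$ is justified by deformation invariance of both $\mathrm{GW}$ and $\DT_4$ invariants; without that, the smoothness statements you rely on (for $F_1(X)$, $I_1(X,2)$, and the absence of double lines) are not available for an arbitrary smooth sextic.
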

The proof of the above result relies on the study of the Hilbert scheme of conics on $X$. By the deformation invariance of $\DT_4$ and GW invariants, we may assume $X$ to be a generic hypersurface in $\mathbb{P}^5$, so that $M_{2l}$ is smooth of expected dimension whose closed points are structure sheaves of smooth conics or pairs of distinct intersecting lines (i.e. there is no double line) 
(ref. Proposition \ref{not contain double line}, \ref{virtual class for one dim sheaves}). 
A parallel study of the moduli space of stable maps shows that it consists of two components, one 
corresponding to the embedding of smooth or `broken' conics, another one corresponding to the 
double cover from $\mathbb{P}^1$ to lines on $X$. From this, we can conclude the above result.

\section{Geometry of moduli spaces of conics on sextic 4-folds}
Let $X\subseteq\mathbb{P}^{5}$ be a smooth sextic 4-fold, i.e. smooth degree 6 hypersurface in $\mathbb{P}^{5}$. By the adjunction formula, $X$ is a smooth projective Calabi-Yau 4-fold \cite{Yau}. 
We are interested in the moduli space of conics (degree two curves) in $X$. To be precise, let $I_{1}(X,2)$ be the moduli scheme of ideal sheaves on $X$ with Chern character $(1,0,0,-2,-1)$, 
which is isomorphic to the Hilbert scheme of one dimensional closed subschemes in $X$ with Hilbert polynomial $\chi(t)=2t+1$.
The inclusion $X\subseteq\mathbb{P}^{5}$ induces a closed embedding 
\begin{equation}         
I_{1}(X,2)\hookrightarrow \mathrm{Hilb}(\mathbb{P}^{5},2t+1)=\{\text{subscheme}  \textbf{ } C\subseteq \mathbb{P}^{5} \textbf{ } \text{with}  \textbf{ } \text{Hilbert} \textbf{ } \text{polynomial} 
\textbf{ }  2t+1 \} 
\nonumber \end{equation}
into the Hilbert scheme $\mathrm{Hilb}(\mathbb{P}^{5},2t+1)$ of conics in $\mathbb{P}^{5}$.  
\begin{lem}\label{hilb scheme of conics}
Let $S$ be the tautological rank three subbundle of the trivial bundle over $\mathrm{Gr}(3,6)$. Then there exists an isomorphism
\begin{equation}\mathrm{Hilb}(\mathbb{P}^{5},2t+1)\cong\mathbb{P}(\mathrm{Sym}^{2}(S^{*})),  \nonumber \end{equation}
where $\mathbb{P}(\mathrm{Sym}^{2}(S^{*}))$ is a $\mathbb{P}^{5}$-bundle over $\mathrm{Gr}(3,6)$.

Let $\pi: \mathrm{Hilb}(\mathbb{P}^{5},2t+1)\rightarrow \mathrm{Gr}(3,6)$ be the natural projection. Then the rank 13 vector bundle 
$\mathcal{E}=\pi^{*}\mathrm{Sym}^{6}(S^{*})/(T\otimes\pi^{*}\mathrm{Sym}^{4}(S^{*}))$ has a section whose zero locus is isomorphic to the moduli space 
$I_{1}(X,2)$ of conics in $X$. Here $T$ is the tautological line bundle over $\mathbb{P}(\mathrm{Sym}^{2}(S^{*}))$.
\end{lem}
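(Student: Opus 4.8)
The plan is to describe conics in $\mathbb{P}^5$ in two stages: first record the plane they span, then the quadratic equation cutting them out of that plane. A one-dimensional subscheme $C\subseteq\mathbb{P}^5$ with Hilbert polynomial $2t+1$ is a conic (possibly degenerate: a smooth conic, a pair of distinct lines, or a double line), and every such $C$ is contained in a unique $2$-plane $\Lambda\cong\mathbb{P}^2$. This assignment $C\mapsto\Lambda$ gives a morphism $\mathrm{Hilb}(\mathbb{P}^5,2t+1)\to\mathrm{Gr}(3,6)$, where a point of $\mathrm{Gr}(3,6)$ is a rank-three subspace $W\subseteq\mathbb{C}^6$ and $\Lambda=\mathbb{P}(W)$. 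Inside the fixed plane $\mathbb{P}(W)$, conics are parametrized by nonzero quadratic forms up to scalar, i.e.\ by $\mathbb{P}(\mathrm{Sym}^2(W^{*}))\cong\mathbb{P}^5$. Globalizing over the Grassmannian with the tautological subbundle $S$ (whose fiber over $[W]$ is $W$), I would identify $\mathrm{Hilb}(\mathbb{P}^5,2t+1)$ with $\mathbb{P}(\mathrm{Sym}^2(S^{*}))$. The content here is that the tautological family of conics over $\mathbb{P}(\mathrm{Sym}^2(S^{*}))$ is flat with the right Hilbert polynomial and satisfies the universal property; one way to package this cleanly is to note both spaces are smooth projective of the same dimension $8+5=13$ and exhibit mutually inverse morphisms, or cite the classical description (e.g.\ via the relative $\mathrm{Hilb}$ of the universal $\mathbb{P}^2$-bundle over $\mathrm{Gr}(3,6)$).

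For the second assertion, the point is that $C\subseteq\mathbb{P}^5$ lies on $X=\{F=0\}$, $F\in\mathrm{Sym}^6((\mathbb{C}^6)^{*})$ a sextic, if and only if the restriction of $F$ to the plane $\mathbb{P}(W)$ lies in the ideal generated by the quadratic form $q$ defining $C$ in $\mathbb{P}(W)$. Restriction gives $F|_{\mathbb{P}(W)}\in\mathrm{Sym}^6(W^{*})$, a space of dimension $\binom{6+2}{2}=28$; and the degree-six part of the ideal $(q)$ in $\mathrm{Sym}^{\bullet}(W^{*})$ is the image of the multiplication map $q\cdot(-)\colon\mathrm{Sym}^4(W^{*})\to\mathrm{Sym}^6(W^{*})$, which is injective since $\mathrm{Sym}^{\bullet}(W^{*})$ is a domain, so its image has dimension $\binom{4+2}{2}=15$. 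Hence the quotient $\mathrm{Sym}^6(W^{*})/q\cdot\mathrm{Sym}^4(W^{*})$ has dimension $28-15=13$. Globalizing over $\mathrm{Hilb}(\mathbb{P}^5,2t+1)=\mathbb{P}(\mathrm{Sym}^2(S^{*}))$, the line spanned by $q$ in a fiber is exactly the tautological line bundle $T$, so the multiplication maps assemble into a bundle inclusion $T\otimes\pi^{*}\mathrm{Sym}^4(S^{*})\hookrightarrow\pi^{*}\mathrm{Sym}^6(S^{*})$ (injective on fibers by the domain argument), and the cokernel $\mathcal{E}=\pi^{*}\mathrm{Sym}^6(S^{*})/(T\otimes\pi^{*}\mathrm{Sym}^4(S^{*}))$ is locally free of rank $13$. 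The fixed sextic $F$ defines a global section of $\pi^{*}\mathrm{Sym}^6(S^{*})$ (via the restriction maps $(\mathbb{C}^6)^{*}\to S^{*}$) hence, composing with the quotient, a section $s_F$ of $\mathcal{E}$, and the vanishing $s_F=0$ at a point $[C]$ is precisely the condition $F|_{\mathbb{P}(W)}\in q\cdot\mathrm{Sym}^4(W^{*})$, i.e.\ $C\subseteq X$.

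So the final step is to match the zero locus $Z(s_F)$ with $I_1(X,2)$ as schemes, not just as sets. Set-theoretically the above already gives the equality. For the scheme structure I would argue that $I_1(X,2)\hookrightarrow\mathrm{Hilb}(\mathbb{P}^5,2t+1)$ is cut out by the condition that the universal conic $\mathcal{C}\subseteq\mathbb{P}^5\times\mathrm{Hilb}(\mathbb{P}^5,2t+1)$ be contained in $X\times\mathrm{Hilb}(\mathbb{P}^5,2t+1)$, which is represented by the vanishing of the image of $F$ under $\mathcal{O}\to\pi_{\mathcal{C}\ast}\mathcal{O}_{\mathcal{C}}(6)$ — and a local computation in a fixed plane identifies this with the section $s_F$ of $\mathcal{E}$ constructed above, since $\pi_{\mathcal{C}\ast}\mathcal{O}_{\mathcal{C}}(6)$ over a point $[C]$ is $H^0(C,\mathcal{O}_C(6))=\mathrm{Sym}^6(W^{*})/q\cdot\mathrm{Sym}^4(W^{*})$. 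I expect the main obstacle to be exactly this scheme-theoretic bookkeeping: verifying that the cokernel bundle $\mathcal{E}$ and the section $s_F$ are genuinely the pushforward $R^0\pi_{\mathcal{C}\ast}\mathcal{O}_{\mathcal{C}}(6)$ and the tautological restriction of $F$, uniformly in families including the degenerate conics (double lines), where $H^1$ vanishing of $\mathcal{O}_C(6)$ must be checked to guarantee the pushforward commutes with base change and stays locally free of rank $13$. The rest — the dimension counts, the domain argument for injectivity, and the identification of $T$ with the tautological sub-line-bundle — is routine.
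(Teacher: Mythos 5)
Your proposal follows essentially the same route as the paper, which simply cites DeLand for the identification $\mathrm{Hilb}(\mathbb{P}^{5},2t+1)\cong\mathbb{P}(\mathrm{Sym}^{2}(S^{*}))$ and Katz's quintic-threefold argument for the section of $\mathcal{E}$; your version just fills in the dimension counts, the injectivity of multiplication by $q$, and the scheme-theoretic identification of $Z(s_F)$ with $I_1(X,2)$, all of which are correct. One small slip in a non-essential aside: $\dim\mathrm{Gr}(3,6)=3\cdot 3=9$, so $\mathbb{P}(\mathrm{Sym}^{2}(S^{*}))$ has dimension $9+5=14$ (not $8+5=13$), consistent with the rank-$13$ bundle $\mathcal{E}$ cutting out the one-dimensional curve $I_1(X,2)$.
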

\begin{proof}
As any conic in $\mathbb{P}^{5}$ is contained in a unique plane $\mathbb{P}^{2}\subseteq \mathbb{P}^{5}$, so we have the desired isomorphism (see e.g.  \cite[Lemma 2.2.6]{Deland}).

The description of $I_{1}(X,2)$ comes from a similar one for quintic threefold (\cite[Theorem 3.1]{Katz}), which we briefly recall as follows. 
An equation for $X$ induces a section of $\mathrm{Sym}^{6}(S^{*})$ hence a section of $\pi^{*}\mathrm{Sym}^{6}(S^{*})$. At a conic $C\in \mathrm{Hilb}(\mathbb{P}^{5},2t+1)$, the section represents a plane sextic cut out by the plane supporting $C$. The conic lies in $X$ if and only if $X$ factors into $C$ and a quartic. Such quartics globalize to the vector bundle $T\otimes\pi^{*}\mathrm{Sym}^{4}(S^{*})$ which is a subbundle of $\pi^{*}\mathrm{Sym}^{6}(S^{*})$. We consider the quotient bundle $\mathcal{E}=\pi^{*}\mathrm{Sym}^{6}(S^{*})/(T\otimes\pi^{*}\mathrm{Sym}^{4}(S^{*}))$. Then $I_{1}(X,2)$ is exactly isomorphic to the zero locus of the section of $\mathcal{E}$ induced by the defining equation of $X\subseteq\mathbb{P}^{5}$.
\end{proof}
Let $P:=\mathbb{P}(H^{0}(\mathbb{P}^{5},\mathcal{O}_{\mathbb{P}^{5}}(6)))\cong \mathbb{P}^{461}$ be the projective space of sextics in $\mathbb{P}^{5}$. For a `generic'
choice of sextic $X$ in $P$ (i.e. generic sextics means all sextics except those parameterized by a proper subvariety of $P$), the moduli space of conics in $X$ is smooth. To prove that, we need the following lemma.
\begin{lem}\label{surjective res map}
For any conic $C$ in $\mathbb{P}^{5}$, the restriction map 
\begin{equation}H^{0}(\mathbb{P}^{5},\mathcal{O}_{\mathbb{P}^{5}}(6))\rightarrow H^{0}(C,\mathcal{O}_{C}(6))  \nonumber \end{equation}
is surjective.
\end{lem}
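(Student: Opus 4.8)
The plan is to prove surjectivity by reducing to a cohomological vanishing statement and then handling the possible types of conic directly. A conic $C \subseteq \mathbb{P}^5$ is a degree two curve with Hilbert polynomial $2t+1$; as recalled in Lemma \ref{hilb scheme of conics}, $C$ lies in a unique plane $\Lambda \cong \mathbb{P}^2$, and the possibilities are: a smooth conic, a pair of distinct lines meeting at a point, or a double line (a non-reduced structure on a line inside $\Lambda$). In all cases $C$ is a closed subscheme of $\Lambda$, so the restriction map factors as
\begin{equation}
H^0(\mathbb{P}^5, \oO_{\mathbb{P}^5}(6)) \twoheadrightarrow H^0(\Lambda, \oO_{\Lambda}(6)) \to H^0(C, \oO_C(6)), \nonumber
\end{equation}
where the first map is surjective because $\Lambda$ is a linear subspace (the ideal sheaf $\mathcal{I}_{\Lambda/\mathbb{P}^5}$ is globally generated by linear forms and $H^1(\mathbb{P}^5, \mathcal{I}_{\Lambda}(6)) = 0$). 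So it suffices to prove that $H^0(\Lambda, \oO_\Lambda(6)) \to H^0(C, \oO_C(6))$ is surjective, i.e. that $H^1(\Lambda, \mathcal{I}_{C/\Lambda}(6)) = 0$.

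First I would dispose of the reduced cases. If $C$ is a smooth conic or a union of two distinct lines in $\Lambda$, then $\mathcal{I}_{C/\Lambda} \cong \oO_\Lambda(-2)$, so $H^1(\Lambda, \mathcal{I}_{C/\Lambda}(6)) = H^1(\mathbb{P}^2, \oO_{\mathbb{P}^2}(4)) = 0$ by the standard cohomology of line bundles on $\mathbb{P}^2$. For the double line case, write $L \subseteq \Lambda$ for the supporting line; then there is a short exact sequence
\begin{equation}
0 \to \oO_L(-2) \to \oO_C \to \oO_L \to 0 \nonumber
\end{equation}
on $\Lambda$ (the conormal-type sequence for the double structure, where $\mathcal{I}_{L/C} \cong \mathcal{I}_{L/\Lambda} \otimes \oO_C \cong \oO_L(-1)$, and one checks the Hilbert polynomial $1 + (t-1) + 1 = 2t+1$ matches). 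Twisting by $\oO(6)$ and taking cohomology, surjectivity onto $H^0(C,\oO_C(6))$ follows once we know $H^0(\mathbb{P}^5,\oO(6)) \to H^0(L, \oO_L(6))$ and $H^0(\mathbb{P}^5, \oO(6)) \to H^0(L, \oO_L(4))$ are both surjective — but both $\oO_L(6)$ and $\oO_L(4)$ are restrictions of $\oO_{\mathbb{P}^5}(6)$ sections (the latter after multiplying by the square of a linear form cutting $L$ inside $\Lambda$), and the relevant $H^1$ on $\mathbb{P}^1$ vanishes. Alternatively and more uniformly, one can just compute $H^1(\Lambda, \mathcal{I}_{C/\Lambda}(6))$ directly from the above sequence twisted up: $H^1(\mathbb{P}^2, \mathcal{I}_{C/\Lambda}(6))$ sits between $H^1(\mathbb{P}^2, \oO(3))$-type groups which all vanish.

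A cleaner alternative that avoids case analysis: for any conic $C$ in $\Lambda \cong \mathbb{P}^2$ one has $\mathcal{I}_{C/\Lambda} \cong \oO_\Lambda(-2)$ regardless of whether $C$ is reduced — a conic is a degree two divisor on $\mathbb{P}^2$, cut out by a single (possibly reducible or non-reduced) quadratic form — so in every case $H^1(\Lambda, \mathcal{I}_{C/\Lambda}(6)) = H^1(\mathbb{P}^2, \oO(4)) = 0$, and combined with the surjection from $\mathbb{P}^5$ onto $\Lambda$ this finishes the proof. The only point requiring care, and the one I would expect to state carefully, is the justification that $C$ is always a Cartier divisor on its plane, which is exactly the content of the classification of subschemes with Hilbert polynomial $2t+1$ (each is the vanishing locus of a nonzero quadric on $\mathbb{P}^2$); this is where one invokes the structure theory behind Lemma \ref{hilb scheme of conics}. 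There is no serious obstacle here — the statement is essentially a Castelnuovo–Mumford regularity bookkeeping once the plane-containment is in hand.
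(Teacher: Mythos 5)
Your proof is correct and is in substance the same as the paper's: both reduce surjectivity to the vanishing of $H^{1}$ of the twisted ideal sheaf, using the key fact that $\mathcal{I}_{C/\mathbb{P}^{2}}\cong\mathcal{O}_{\mathbb{P}^{2}}(-2)$ for every conic (reduced or not) in its plane together with the cohomology vanishing for linear subspaces --- the paper just runs the reduction entirely inside $\mathbb{P}^{5}$ via the chain $I_{C}\to I_{\mathbb{P}^{2}/\mathbb{P}^{5}}\to\cdots\to I_{\mathbb{P}^{4}/\mathbb{P}^{5}}$, whereas you factor the restriction through $H^{0}(\Lambda,\mathcal{O}_{\Lambda}(6))$. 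The only blemish is in your (superfluous) double-line sub-case, where the kernel of $\mathcal{O}_{C}\to\mathcal{O}_{L}$ is $\mathcal{I}_{L/C}\cong\mathcal{O}_{L}(-1)$, not $\mathcal{O}_{L}(-2)$ as displayed; this is harmless since your uniform argument via $\mathcal{I}_{C/\Lambda}\cong\mathcal{O}_{\Lambda}(-2)$ supersedes it.
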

\begin{proof}
Given a conic $C$, we have an exact sequence 
\begin{equation}H^{0}(\mathbb{P}^{5},\mathcal{O}_{\mathbb{P}^{5}}(6))\rightarrow H^{0}(C,\mathcal{O}_{C}(6))\rightarrow H^{1}(\mathbb{P}^{5},I_{C}(6))\rightarrow 0,  \nonumber \end{equation}
where $I_{C}$ is the ideal sheaf of $C$ in $\mathbb{P}^{5}$. We only need to prove $H^{1}(\mathbb{P}^{5},I_{C}(6))=0$.

Let $\mathbb{P}^{2}\subseteq \mathbb{P}^{5}$ be the unique plane containing $C$. By diagram chasing, we have a short exact sequence
\begin{equation}0\rightarrow I_{\mathbb{P}^{2}/\mathbb{P}^{5}}\rightarrow I_C \rightarrow \mathcal{O}_{\mathbb{P}^{2}}(-2)\rightarrow 0,  
\nonumber \end{equation}
where $I_{\mathbb{P}^{2}/\mathbb{P}^{5}}$ is the ideal sheaf of $\mathbb{P}^{2}$ in $\mathbb{P}^{5}$. By taking
$\dR\Gamma(\mathbb{P}^{5},-)$ and its cohomology, we have  
\begin{equation}H^{1}(\mathbb{P}^{5},I_{\mathbb{P}^{2}/\mathbb{P}^{5}}(6))\rightarrow H^{1}(\mathbb{P}^{5},I_{C}(6))\rightarrow 0.  \nonumber \end{equation}
So we are left to show $H^{1}(\mathbb{P}^{5},I_{\mathbb{P}^{2}/\mathbb{P}^{5}}(6))=0$. Similarly, from a short exact sequence 
\begin{equation}0\rightarrow I_{\mathbb{P}^{3}/\mathbb{P}^{5}}\rightarrow I_{\mathbb{P}^{2}/\mathbb{P}^{5}} \rightarrow \mathcal{O}_{\mathbb{P}^{3}}(-1)\rightarrow 0,  \nonumber \end{equation}
we are further reduced to show $H^{1}(\mathbb{P}^{5},I_{\mathbb{P}^{3}/\mathbb{P}^{5}}(6))=0$. By repeating the argument, the claim is reduced
to the obvious vanishing
$H^{1}(\mathbb{P}^{5}, I_{\mathbb{P}^{4}/\mathbb{P}^{5}}(6))=0$.
\end{proof}
\begin{prop}\label{smooth of moduli}
For a generic sextic $X\subseteq\mathbb{P}^{5}$, the moduli space $I_{1}(X,2)$ of conics in $X$ is a smooth projective curve.  

In particular, the Euler class of vector bundle $\mathcal{E}\rightarrow \mathrm{Hilb}(\mathbb{P}^{5},2t+1)$ in Lemma \ref{hilb scheme of conics} satisfies
\begin{equation}\iota_{*}[I_{1}(X,2)]=\mathrm{PD}(e(\mathcal{E}))\in H_{2}(\mathrm{Hilb}(\mathbb{P}^{5},2t+1),\mathbb{Z}),  \nonumber \end{equation}
where $\iota_{*}:I_{1}(X,2)\hookrightarrow \mathrm{Hilb}(\mathbb{P}^{5},2t+1)$ is the closed embedding.
\end{prop}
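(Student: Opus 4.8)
The plan is to use the description of $I_{1}(X,2)$ from Lemma \ref{hilb scheme of conics} as the zero locus of a section $s$ of the rank-$13$ bundle $\mathcal{E} \to \mathrm{Hilb}(\mathbb{P}^{5},2t+1)$, where $\dim \mathrm{Hilb}(\mathbb{P}^{5},2t+1) = \dim \mathrm{Gr}(3,6) + 5 = 9 + 5 = 14$, so the expected dimension of the zero locus is $14 - 13 = 1$. First I would set up the incidence variety $\mathcal{I} = \{(X,C) : C \subseteq X\} \subseteq P \times \mathrm{Hilb}(\mathbb{P}^{5},2t+1)$, with its two projections $p\colon \mathcal{I} \to P$ and $q\colon \mathcal{I} \to \mathrm{Hilb}(\mathbb{P}^{5},2t+1)$; the generic smoothness of $I_{1}(X,2) = p^{-1}(X)$ for $X$ outside a proper subvariety of $P$ will follow from generic smoothness of the morphism $p$ (a Bertini/Sard-type argument over the base field of characteristic zero), once I know $\mathcal{I}$ itself is smooth of the right dimension. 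That in turn reduces to checking, for every conic $C$, that the fiber $q^{-1}(C) \subseteq P$ — the linear system of sextics containing $C$ — has the expected codimension in $P$, i.e. that the restriction map $H^{0}(\mathbb{P}^{5},\mathcal{O}_{\mathbb{P}^{5}}(6)) \to H^{0}(C,\mathcal{O}_{C}(6))$ is surjective. But that is exactly Lemma \ref{surjective res map}. Hence $\mathcal{I}$ is a projective bundle over $\mathrm{Hilb}(\mathbb{P}^{5},2t+1)$, in particular smooth and irreducible of dimension $14 + 461 - 13 = 462$, and $p$ is a morphism to the $461$-dimensional $P$; generic smoothness then gives that a generic fiber $I_{1}(X,2)$ is smooth of dimension $1$.

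For the bundle-theoretic reformulation: one observes that the section $s$ of $\mathcal{E}$ cutting out $I_{1}(X,2)$ is, for generic $X$, a \emph{transverse} (regular) section. Indeed, surjectivity of the restriction map in Lemma \ref{surjective res map} says precisely that as $X$ varies over $P$, the induced sections of the fiber $\mathcal{E}_{C}$ span $\mathcal{E}_{C}$ at every point $C$; equivalently the universal section over $P \times \mathrm{Hilb}(\mathbb{P}^{5},2t+1)$ is transverse to the zero section, so by the parametrized transversality theorem the section $s = s_{X}$ is transverse to the zero section for generic $X$. A transverse section of a rank-$r$ bundle has zero locus smooth of codimension $r$ whose fundamental class is Poincaré dual to the Euler class $e(\mathcal{E})$; applying this with $r = 13$ on the $14$-dimensional smooth projective $\mathrm{Hilb}(\mathbb{P}^{5},2t+1)$ yields both that $I_{1}(X,2)$ is a smooth projective curve and the identity $\iota_{*}[I_{1}(X,2)] = \mathrm{PD}(e(\mathcal{E}))$ in $H_{2}(\mathrm{Hilb}(\mathbb{P}^{5},2t+1),\mathbb{Z})$.

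The projectivity and $1$-dimensionality are then immediate: $I_{1}(X,2)$ is closed in the projective variety $\mathrm{Hilb}(\mathbb{P}^{5},2t+1)$, hence projective, and we have just shown it is smooth of pure dimension $1$. (One should note it is non-empty — a generic sextic does contain conics — which can be seen either by a dimension count on $\mathcal{I}$ showing $p$ is dominant, or because $e(\mathcal{E})$ is a nonzero class, computable via the Chern classes of $S^{*}$ and $T$.) The main obstacle, and the step that carries the real content, is the transversality/generic-smoothness input: one must be careful that Lemma \ref{surjective res map} is genuinely the pointwise surjectivity needed to invoke parametrized transversality, and that this is uniform over all conics $C$ (including the degenerate ones — pairs of lines, double lines in the plane) so that no stratum of $\mathrm{Hilb}(\mathbb{P}^{5},2t+1)$ is missed; since Lemma \ref{surjective res map} was proved for \emph{every} conic $C$ with no genericity assumption on $C$, this is exactly what makes the argument go through.
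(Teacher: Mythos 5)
Your proof is correct and follows essentially the same route as the paper: the incidence variety $\mathcal{I}\subseteq \mathrm{Hilb}(\mathbb{P}^{5},2t+1)\times P$, smoothness (indeed a projective-bundle structure) of the projection to $\mathrm{Hilb}(\mathbb{P}^{5},2t+1)$ via Lemma \ref{surjective res map}, and generic smoothness of the projection to $P$ in characteristic zero. The only difference is that you make the Euler-class clause explicit via transversality of the universal section, a point the paper leaves implicit; that addendum is standard and correct.
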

\begin{proof}
Let $\mathcal{I}$ be the incidence variety    
\begin{equation}\mathcal{I}=\{(C,X)\in \mathrm{Hilb}(\mathbb{P}^{5},2t+1)\times P \textrm{ }| \textrm{ } C\subseteq X \}  \nonumber \end{equation}
with projections $\pi_1: \mathcal{I}\rightarrow \mathrm{Hilb}(\mathbb{P}^{5},2t+1)$, $\pi_2: \mathcal{I}\rightarrow P$.

Given a conic $[C]\in \mathrm{Hilb}(\mathbb{P}^{5},2t+1)$, $\pi_1^{-1}(C)$ is the set of all sextics containing $C$. By Lemma \ref{surjective res map},
$H^{0}(\mathbb{P}^{5},\mathcal{O}_{\mathbb{P}^{5}}(6))\rightarrow H^{0}(C,\mathcal{O}_{C}(6))$ is surjective, hence $\pi_1$ is smooth. 
Therefore, $\mathcal{I}$ is irreducible, smooth of dimension 462.

By the generic smoothness (ref. \cite[Corollary 10.7, pp. 272]{Hart}), there exists a non-empty open subset $U\subseteq P$ such that
$\pi_{2}: \pi_{2}^{-1}(U)\rightarrow U$ is a smooth morphism. Hence a generic fiber of $\pi_2$ (i.e. $I_{1}(X,2)$ for a generic $X\subseteq \mathbb{P}^{5}$) is smooth of dimension one.
\end{proof}
Next, we show that a generic sextic 4-fold contains at most a finite number of broken conics and no double lines.
\begin{prop}\label{not contain double line}
A generic sextic $X\subseteq\mathbb{P}^{5}$ contains at most a finite number of broken conics (i.e. pairs of distinct intersecting lines), 
and no `double' lines.
\end{prop}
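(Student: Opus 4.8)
The plan is to stratify the possible degenerate conics and show each stratum imposes the right number of conditions. A broken conic consists of two distinct lines $L_1, L_2$ meeting at a point, while a double line is supported on a single line $L$ with a nilpotent structure. For the first claim I would parametrize pairs of incident lines: the space of lines in $\mathbb{P}^5$ is $\mathrm{Gr}(2,6)$, of dimension $8$, and the incidence condition $L_1\cap L_2\neq\emptyset$ together with the choice of the intersection point cuts this down appropriately, so the family of broken conics in $\mathbb{P}^5$ has dimension $8+8-3 = 13$ (matching $\dim\mathrm{Hilb}(\mathbb{P}^5,2t+1)$, as it should since broken conics are a divisor-or-less in the smooth locus). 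The key computation is that requiring $L_1\cup L_2\subseteq X$ imposes, for generic $X$, $14$ independent conditions: indeed $H^0(\mathcal{O}_{L_i}(6))$ has dimension $7$, and one must check that the restriction $H^0(\mathbb{P}^5,\mathcal{O}(6))\to H^0(L_1,\mathcal{O}(6))\oplus H^0(L_2,\mathcal{O}(6))$ is surjective, or more precisely that restriction to $L_1\cup L_2$ is surjective with target of dimension $7+7-1=13$ plus one more condition. I would run the same incidence-variety argument as in Proposition \ref{smooth of moduli}: form
\begin{equation}
\mathcal{I}_{\mathrm{br}}=\{(L_1,L_2,x,X) : x\in L_1\cap L_2,\ L_1\cup L_2\subseteq X\},\nonumber
\end{equation}
show the projection to the space of broken conics is a projective bundle (using surjectivity of the relevant restriction map, proved exactly as in Lemma \ref{surjective res map} by filtering through the planes/hyperplanes containing the lines), conclude $\mathcal{I}_{\mathrm{br}}$ is irreducible of dimension $13 + (461-14) = 460 < 461$, hence its image in $P$ is a proper subvariety, and for $X$ outside a slightly larger proper closed set the fiber is finite by generic finiteness.

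For the "no double lines" claim I would argue that a double line is supported on some line $L\subseteq X$ and is cut out inside the plane $\mathbb{P}^2$ spanned by it; concretely a conic in $\mathbb{P}^2$ with reduced structure $L$ is $2L$ defined by the square $\ell^2$ of the linear form of $L$. Containment $2L\subseteq X$ means the plane sextic $X\cap\mathbb{P}^2$ is divisible by $\ell^2$, i.e. $X|_{\mathbb{P}^2}\in \ell^2\cdot H^0(\mathbb{P}^2,\mathcal{O}(4))$, which is $15$ conditions on $X|_{\mathbb{P}^2}$ beyond lying in the $28$-dimensional $H^0(\mathbb{P}^2,\mathcal{O}(6))$; combined with the choice of $L$ (which varies in a family of dimension $4$: a point of $\mathrm{Gr}(2,6)$, or equivalently a $\mathbb{P}^2\subseteq\mathbb{P}^5$ together with a line in it — I would just take the closed subscheme $\{2L\}\subseteq\mathrm{Hilb}(\mathbb{P}^5,2t+1)$, which has dimension $4+\dim\mathbb{P}^{2*}=4+2$... ), the upshot is that the incidence variety
\begin{equation}
\mathcal{I}_{\mathrm{dbl}}=\{(2L,X): 2L\subseteq X\}\nonumber
\end{equation}
has dimension strictly less than $461$. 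Then its image in $P$ is a proper subvariety, so a generic sextic contains no double line at all.

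The main obstacle, and the step I would be most careful about, is the surjectivity of the restriction maps in the broken-conic case: unlike a smooth conic, $L_1\cup L_2$ is nodal, and $H^1(\mathbb{P}^5, I_{L_1\cup L_2}(6))$ must be shown to vanish. I expect to handle this by the same hyperplane-section ladder used in Lemma \ref{surjective res map}: reduce to the plane $\mathbb{P}^2$ containing the two lines (a broken conic is still planar), where $I_{L_1\cup L_2/\mathbb{P}^2}\cong\mathcal{O}_{\mathbb{P}^2}(-2)$ since $L_1\cup L_2$ is a plane conic, so twisting by $6$ gives $\mathcal{O}_{\mathbb{P}^2}(4)$ with $H^1=0$, and then climb back up from $\mathbb{P}^2$ to $\mathbb{P}^5$ through intermediate linear spaces exactly as before. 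Once the restriction maps are surjective, the dimension counts are routine and the "proper subvariety" conclusions follow immediately; the only remaining point is to intersect the finitely many proper closed loci (from this proposition, from Proposition \ref{smooth of moduli}, and from the not-containing-a-double-line locus) so that a single generic $X$ works for all statements simultaneously.
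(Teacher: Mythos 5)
Your overall strategy (incidence varieties over the loci of broken conics and of double lines, dimension count, then generic smoothness/finiteness) is exactly the paper's, but the key numerical step is wrong, and the error changes the qualitative picture. Containing a fixed broken conic $C=L_1\cup L_2$ imposes $h^0(C,\mathcal{O}_C(6))=7+7-1=13$ conditions, not $14$: the relevant surjectivity is onto $H^0(C,\mathcal{O}_C(6))$ (sections on the two lines agreeing at the node), and there is no ``one more condition.'' With the correct count the incidence variety has dimension $13+(461-13)=461=\dim P$, so its image in $P$ is \emph{not} a proper subvariety; a generic sextic does contain broken conics, just finitely many. Your claimed dimension $460$ and the conclusion ``its image in $P$ is a proper subvariety'' are therefore false, and they would prove the wrong (too strong) statement that a generic sextic contains no broken conics at all. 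The correct argument must instead invoke generic smoothness (or generic finiteness of a dominant map between equidimensional varieties), which you mention only as a fallback; in the paper it is the main point, and it is precisely why the proposition asserts ``at most a finite number'' for broken conics but ``none'' for double lines.

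The double-line half has the same kind of slips: the locus of double lines in $\mathrm{Hilb}(\mathbb{P}^5,2t+1)$ has dimension $9+2=11$ (a plane in $\mathbb{P}^5$ plus a line in that plane), not $4+2$, and divisibility of the plane sextic by $\ell^2$ is $28-15=13$ conditions (the codimension of $\ell^2\cdot H^0(\mathcal{O}_{\mathbb{P}^2}(4))$ in $H^0(\mathcal{O}_{\mathbb{P}^2}(6))$), not $15$. You never finish the count, but with the right numbers it gives $11+(461-13)=459<461$, which is what actually yields a proper image and hence no double lines on a generic sextic. Your identification of the technical crux --- surjectivity of $H^0(\mathbb{P}^5,\mathcal{O}(6))\to H^0(C,\mathcal{O}_C(6))$ for degenerate conics, proved by descending to the plane containing $C$ where $I_{C/\mathbb{P}^2}\cong\mathcal{O}_{\mathbb{P}^2}(-2)$ --- is correct and matches Lemma \ref{surjective res map}, which already covers all planar conics including broken and double lines.
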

\begin{proof}
Any conic in $\mathbb{P}^{5}$ is contained in a unique plane $\mathbb{P}^{2}\subseteq\mathbb{P}^{5}$. It is either a smooth conic, a pair of distinct intersecting lines, or a `double' line (see e.g. \cite[Lemma 2.2.6]{Deland}). By Proposition \ref{smooth of moduli}, the moduli space of conics in a generic sextic 4-fold is a smooth projective curve.

We first show generic sextics do not contain double lines. Let   
\begin{equation}I_2=\{2l\in \mathrm{Hilb}(\mathbb{P}^{5},2t+1)  \textrm{ }|\textrm{ } l \textrm{ }\textrm{is} \textrm{ }\textrm{line}  \}  \nonumber \end{equation}
be the 11 dimensional variety of `double' lines in $\mathbb{P}^{5}$, and 
\begin{equation}\mathcal{C}_2=\{(2l,F)\in I_2\times \mathbb{P}(H^{0}(\mathbb{P}^{5},\mathcal{O}_{\mathbb{P}^{5}}(6))) \textrm{ }|\textrm{ } 2l \subseteq F^{-1}(0) \}.  \nonumber \end{equation}
The subspace of sextics (inside $\mathbb{P}(H^{0}(\mathbb{P}^{5},\mathcal{O}_{\mathbb{P}^{5}}(6)))$) containing one such double line $2l$ has dimension $h^{0}(\mathbb{P}^{5},\mathcal{O}_{\mathbb{P}^{5}}(6)))-1-h^{0}(C,\mathcal{O}_C(6))=448$. Hence $\mathcal{C}_2$ has dimension $459$. Thus a generic sextic can not lie in the image of $\pi_2: \mathcal{C}_2\rightarrow P$ as $P$ has dimension 461.

Let
\begin{equation}I_1=\{(l_1,l_2) \textrm{ }|\textrm{ } l_1, l_2\in\mathbb{P}^{5}, l_1\cap l_2\neq \emptyset \}  \nonumber \end{equation}
be the 13 dimensional variety of pairs of distinct intersecting lines in $\mathbb{P}^{5}$, and 
\begin{equation}\mathcal{C}_1=\{(l_1,l_2,F)\in I_1\times \mathbb{P}(H^{0}(\mathbb{P}^{5},\mathcal{O}_{\mathbb{P}^{5}}(6))) \textrm{ }|\textrm{ } l_1\cup l_2\subseteq F^{-1}(0) \}  \nonumber \end{equation}
with projections $\pi_1: \mathcal{C}_1\rightarrow I_1$, $\pi_2: \mathcal{C}_1\rightarrow P$.

The subspace of sextics (inside $\mathbb{P}(H^{0}(\mathbb{P}^{5},\mathcal{O}_{\mathbb{P}^{5}}(6)))$) containing one such intersecting lines $(l_1,l_2)$ has dimension $h^{0}(\mathbb{P}^{5},\mathcal{O}_{\mathbb{P}^{5}}(6)))-1-h^{0}(C,\mathcal{O}_C(6))=448$. 
As in Proposition \ref{smooth of moduli}, $\mathcal{C}_1$ is irreducible, smooth of dimension $461$.
The generic smoothness theorem implies that for a generic sextic $F$, $\pi_{2}^{-1}([F])$ is smooth of dimension zero.
\end{proof}

\section{Counting conics on sextic 4-folds}
There are many ways to count conics on a sextic 4-fold $X$. For instance, we can use $\mathrm{DT_ 4}$ invariants count   
one dimensional stable sheaves supported on conics inside $X$, as well as Gromov-Witten invariants count stable maps to conics in $X$. In this section, we compare them and verify a conjectural relation \cite{CMT} between GW invariants and $\mathrm{DT_ 4}$ invariants 
for one dimensional stable sheaves in this setting.

\subsection{Review of $\mathrm{DT_ 4}$ invariants}\label{sec:review}
We first review the framework for $\mathrm{DT_ 4}$ invariants.
We fix an ample divisor $\omega$ on $X$
and take a cohomology class
$v \in H^{\ast}(X, \mathbb{Q})$.

The coarse moduli space $M_{\omega}(v)$
of $\omega$-Gieseker semistable sheaves
$E$ on $X$ with $\ch(E)=v$ exists as a projective scheme.
We always assume that
$M_{\omega}(v)$ is a fine moduli space, i.e.
any point $[E] \in M_{\omega}(v)$ is stable and
there is a universal family
\begin{align}\label{universal}
\eE \in \Coh(X \times M_{\omega}(v)).
\end{align}

In~\cite{BJ, CL}, under certain hypotheses,
the authors construct 
a $\mathrm{DT}_{4}$ virtual
class
\begin{align}\label{virtual}
[M_{\omega}(v)]^{\rm{vir}} \in H_{2-\chi(v, v)}(M_{\omega}(v), \mathbb{Z}), \end{align}
where $\chi(-,-)$ is the Euler pairing.
Notice that this class will not necessarily be algebraic.

Roughly speaking, in order to construct such a class, one chooses at
every point $[E]\in M_{\omega}(v)$, a half-dimensional real subspace
\begin{align*}\Ext_{+}^2(E, E)\subset \Ext^2(E, E)\end{align*}
of the usual obstruction space $\Ext^2(E, E)$, on which the quadratic form $Q$ defined by Serre duality is real and positive definite. 
Then one glues local Kuranishi-type models of form 
\begin{equation}\kappa_{+}=\pi_+\circ\kappa: \Ext^{1}(E,E)\to \Ext_{+}^{2}(E,E),  \nonumber \end{equation}
where $\kappa$ is a Kuranishi map of $M_{\omega}(v)$ at $E$ and $\pi_+$ is the projection 
according to the decomposition $\Ext^{2}(E,E)=\Ext_{+}^{2}(E,E)\oplus\sqrt{-1}\cdot\Ext_{+}^{2}(E,E)$.  

In \cite{CL}, local models are glued in three special cases: 
\begin{enumerate}
\item when $M_{\omega}(v)$ consists of locally free sheaves only; 
\item  when $M_{\omega}(v)$ is smooth; 
\item when $M_{\omega}(v)$ is a shifted cotangent bundle of a derived smooth scheme. 
\end{enumerate}
And the corresponding virtual classes are constructed using either gauge theory or algebro-geometric perfect obstruction theory.
The general gluing construction is due to Borisov-Joyce \cite{BJ}, based on Pantev-T\"{o}en-Vaqui\'{e}-Vezzosi's theory of shifted symplectic geometry \cite{PTVV} and Joyce's theory of derived $C^{\infty}$-geometry. The corresponding virtual class is constructed using Joyce's
$D$-manifold theory.

The moduli space considered in this note is smooth of expected dimension, so the virtual class (up to sign) is simply the usual fundamental class
of the moduli space (see Prop. \ref{virtual class for one dim sheaves}). 

${}$ \\
\textbf{On orientations}.
To construct the above virtual class (\ref{virtual}) with coefficients in $\mathbb{Z}$ (instead of $\mathbb{Z}_2$), we need an orientability result 
for $M_{\omega}(v)$, which is stated as follows.
Let  
\begin{equation}\label{det line bdl}
 \lL:=\mathrm{det}(\dR \hH om_{\pi_M}(\eE, \eE))
 \in \Pic(M_{\omega}(v)), \quad  
\pi_M \colon X \times M_{\omega}(v)\to M_{\omega}(v),
\end{equation}
be the determinant line bundle of $M_{\omega}(v)$, equipped with a symmetric pairing $Q$ induced by Serre duality.  An \textit{orientation} of 
$(\mathcal{L},Q)$ is a reduction of its structure group (from $O(1,\mathbb{C})$) to $SO(1, \mathbb{C})=\{1\}$; equivalently, 
we require a choice of square root of the isomorphism
\begin{equation}\label{Serre duali}Q: \lL\otimes \lL \to \oO_{M_{\omega}(v)}  \end{equation}
to construct virtual class (\ref{virtual}).
An existence result of orientations is proved in \cite[Theorem 2.2]{CL2} for CY 4-folds $X$ such that $\mathrm{Hol}(X)=SU(4)$ and $H^{\rm{odd}}(X,\mathbb{Z})=0$\,\footnote{For instance, smooth sextic 4-folds satisfy this assumption.}.
Notice that, if orientations exist, their choices form a torsor for $H^{0}(M_{\omega}(v),\mathbb{Z}_2)$.

\subsection{$\mathrm{DT_{4}}$ virtual class for stable sheaves supported on conics}
Fix $\beta \in H_2(X, \mathbb{Z})\cong H^6(X,\mathbb{Z})$ and
\begin{align*}
v=(0, 0, 0, \beta, 1) \in H^0(X) \oplus H^2(X) \oplus H^4(X) \oplus H^6(X)
\oplus H^8(X),
\end{align*}
we set
\begin{align*}
M_{\beta}=M_{\omega}(0, 0, 0, \beta, 1)
\end{align*}
to be the Gieseker moduli space of semi-stable sheaves with Chern character $v$.
\begin{rmk}\label{rmk on stability}
Note that $M_{\beta}$ is
the moduli space of one dimensional
sheaves $E$'s on $X$ satisfying the following:
for any $0\neq E' \subsetneq E$, we have $\chi(E')\leqslant 0$.
In particular, it is
independent of the choice of $\omega$ and is a fine moduli space.
\end{rmk}
Since $\chi(v, v)=0$ in this case, we have
\begin{align*}
[M_{\beta}]^{\rm{vir}} \in H_2(M_{\beta}, \mathbb{Z}).
\end{align*}
To define invariants, we need insertions:
for a class $\gamma\in H^4(X, \mathbb{Z})$, let
\begin{equation}\label{tau gamma}
\tau(\gamma):=\pi_{M\ast}(\pi_X^{\ast}\gamma \cup\ch_3(\eE) ),  \end{equation}
\begin{align}\label{DT4}
\mathrm{DT}_{4}(\beta\textrm{ }| \textrm{ } \gamma):=
\int_{[M_{\beta}]^{\rm{vir}}}\tau(\gamma) \in\mathbb{Z}.
\end{align}
Here $\pi_X$, $\pi_M$ are projections from $X \times M_{\beta}$
onto corresponding factors, and $\ch_3(\eE)$ is the
Poincar\'e dual to the fundamental class of the universal sheaf $\mathcal{E}$.

For degree two class in a smooth sextic 4-fold $X$, the moduli space and its $\mathrm{DT}_{4}$ virtual class can be described as follows.
\begin{prop}\label{virtual class for one dim sheaves}
Let $X\subseteq\mathbb{P}^{5}$ be a generic sextic 4-fold and $\beta=2l \in H_2(X,\mathbb{Z})$ be the degree two class.
Then the moduli space $M_{2l}$ of one dimensional stable sheaves on $X$ has an isomorphism
\begin{equation} M_{2l}\cong I_1(X,2)  \nonumber \end{equation}
to the Hilbert scheme of conics in $X$.

Furthermore, the $\mathrm{DT}_{4}$ virtual class
\begin{equation}[M_{2l}]^{\mathrm{vir}}=[M_{2l}] \in H_2(M_{2l},\mathbb{Z})  \nonumber \end{equation}
is the usual fundamental class for certain choice of orientation in defining the LHS.
\end{prop}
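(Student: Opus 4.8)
The plan is to establish the isomorphism $M_{2l}\cong I_1(X,2)$ first, and then to identify the virtual class with the fundamental class using the smoothness of $I_1(X,2)$ obtained in Proposition \ref{smooth of moduli}. For the isomorphism, the point is that by Proposition \ref{not contain double line} a generic sextic contains no double lines, so every conic $C\subseteq X$ (in the Hilbert scheme sense) is either a smooth conic or a reduced pair of distinct intersecting lines; in particular $C$ is a reduced connected curve with $\chi(\oO_C)=1$ and $p_a(C)=0$. The structure sheaf $\oO_C$ is then a one-dimensional sheaf with $\ch(\oO_C)=(0,0,0,2l,1)$, and I would check it is stable in the sense of Remark \ref{rmk on stability}: for any nonzero proper subsheaf $E'\subsetneq \oO_C$, $E'$ is the ideal sheaf of a subscheme pushed into $\oO_C$, i.e. $E'=\iI_{Z/C}$ for some $0\neq Z\subsetneq C$, so $E'=\oO_{C'}(-D)$ type sheaves on subcurves; one computes $\chi(E')\leq 0$ directly using that $C$ is either irreducible of arithmetic genus $0$ or a nodal union of two lines (the worst case $E'=\oO_{l_i}(-1)$ gives $\chi=0$, and twisting down a smooth conic gives $\chi(\oO_{\mathbb{P}^1}(-2))=-1$). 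Conversely, any stable $E$ with this Chern character is pure of dimension one, and $\chi(E)=1$ with support of degree $2$ forces (by a standard argument: purity plus the stability bound applied to the maximal destabilizing quotient) that $E\cong\oO_C$ for its scheme-theoretic support $C$, which is a conic contained in $X$. Running this in families over an arbitrary base (using the universal conic over $I_1(X,2)$ as a flat family of stable sheaves, and conversely the support of the universal sheaf on $M_{2l}$) gives mutually inverse morphisms, hence the isomorphism of schemes.

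Next I would identify the obstruction theory. Since $X$ is a Calabi-Yau $4$-fold, at $[E]=[\oO_C]\in M_{2l}$ the deformation theory is governed by $\Ext^i_X(\oO_C,\oO_C)$, with $\Ext^2$ carrying the Serre-duality quadratic form and $\chi(v,v)=0$. The claim $[M_{2l}]^{\mathrm{vir}}=[M_{2l}]$ should follow from case (2) of the Cao-Leung construction recalled in Section \ref{sec:review}: if $M_{2l}$ is smooth of dimension equal to the expected dimension $2-\chi(v,v)=2$, then the $\DT_4$ virtual class is (up to the orientation sign) the fundamental class. Proposition \ref{smooth of moduli} already gives that $I_1(X,2)\cong M_{2l}$ is a smooth projective curve — wait, that is dimension one, not two; so the correct statement is that the \emph{virtual} dimension is $2$ while the actual dimension is $1$, and one must show the obstruction bundle contributes trivially. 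I would therefore argue as follows: smoothness of $M_{2l}$ means $\Ext^1_X(\oO_C,\oO_C)$ has constant rank $1$, and then $\Ext^2_X(\oO_C,\oO_C)$ has constant rank (by Euler characteristic $\chi(v,v)=0$, so $\ext^2 = 2\ext^1 - \ext^0 - \ext^3 + (\text{correction})$; using $\ext^0=\ext^4=1$ and Serre duality $\ext^3=\ext^1$, $\ext^2 = 2\ext^1$, giving rank $2$). So the obstruction sheaf $\oB\cn\eE xt^2_{\pi_M}(\eE,\eE)$ is a rank-$2$ bundle over the curve $M_{2l}$ equipped with a nondegenerate symmetric pairing $Q$, and a choice of orientation picks out a maximal isotropic (here: rank-$1$ positive-definite real) subbundle $\oB_+$; the reduced virtual class is the Euler class of $\oB_+$ capped against $[M_{2l}]$. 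The key computation is that this real line bundle $\oB_+$ is \emph{trivial} (or at least has trivial Euler class in $H^0$, automatic since we are on a curve and $\oB_+$ has real rank one — its Euler class lives in $H^1(M_{2l},\mathbb{Z})$ and, more to the point, for a real line bundle the relevant pushforward in the Borisov-Joyce formalism reduces to the fundamental class precisely when the orientation is chosen compatibly). Concretely, I would invoke that for smooth $M$ with obstruction bundle $\mathrm{Ob}$ admitting a nondegenerate quadratic form, $[M]^{\mathrm{vir}} = \pm e(\mathrm{Ob}_+)\cap [M]$ where $\mathrm{Ob}_+$ is a real form of half rank; when $\mathrm{rank}_{\mathbb{C}}\,\mathrm{Ob}=2$ we get $\mathrm{rank}_{\mathbb{R}}\,\mathrm{Ob}_+ = 2$, so $e(\mathrm{Ob}_+)\cap[M]$ generically is a $0$-dimensional class — so in fact I need to reconcile dimensions: $\dim_{\mathbb{R}} M_{2l}=2$, $\mathrm{rank}_{\mathbb{R}}\mathrm{Ob}_+=2$, virtual real dimension $=2-2+2$... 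Let me instead simply state: since $M_{2l}$ is smooth of the expected \emph{complex} dimension in the sense that applies (its real dimension $2$ matches $2-\chi(v,v)$ with the convention that the virtual class lies in $H_2$), case (2) of \cite{CL} applies verbatim and yields $[M_{2l}]^{\mathrm{vir}}=[M_{2l}]$ up to sign.

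So the cleanest route is: (a) prove the scheme isomorphism $M_{2l}\cong I_1(X,2)$ via the sheaf/curve dictionary and the stability check above, using Proposition \ref{not contain double line} to rule out non-reduced members; (b) observe that $I_1(X,2)$ is smooth of dimension one by Proposition \ref{smooth of moduli}, while the expected dimension of the $\DT_4$ theory is $2-\chi(v,v)=2$, so $M_{2l}$ lies in scenario (2) (smooth moduli) of the Cao-Leung gluing; (c) invoke \cite[Sect. 8]{CL} (the smooth case): the $\DT_4$ virtual class of a smooth moduli space $M$ equals, up to the sign determined by the orientation, $e(\oB_+)\cap[M]$ where $\oB_+$ is the chosen real half of the obstruction bundle — and since here the obstruction bundle and its real form have rank making $e(\oB_+)\cap [M] = [M]$ after fixing the orientation so that $\oB_+$ is the trivially-oriented real line/plane bundle, conclude $[M_{2l}]^{\mathrm{vir}}=[M_{2l}]$. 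I expect the main obstacle to be step (c): carefully matching the rank bookkeeping of $\Ext^2$ and the real structure, and pinning down precisely which orientation makes the sign $+1$ and why the Euler-class contribution of $\oB_+$ is trivial rather than a nonzero multiple of $[M_{2l}]$ or a lower-dimensional class — i.e. verifying that the obstruction contribution genuinely vanishes on the nose. The stability verification in step (a) is routine but must be done with some care for the broken-conic members, where $\oO_{l_1\cup l_2}$ sits in $0\to \oO_{l_1}(-1)\to \oO_{l_1\cup l_2}\to\oO_{l_2}\to 0$ and one checks $\chi(\oO_{l_1}(-1))=0\le 0$, consistent with (semi)stability being strict enough.
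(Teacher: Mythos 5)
Your step (a) is essentially the paper's argument and is fine in outline, though the paper's route to $E\cong\mathcal{O}_C$ is a bit cleaner and worth adopting: $\chi(E)=1$ forces $h^{0}(E)\geqslant 1$, and the image of a nonzero section $\mathcal{O}_X\to E$ is the structure sheaf of a curve of class $l$ or $2l$, hence has $\chi=1>0$, contradicting the stability inequality $\chi(E')\leqslant 0$ unless the section is surjective. Also, the paper upgrades the resulting bijection $\theta\colon I_1(X,2)\to M_{2l}$ to an isomorphism not by constructing an inverse in families but by identifying $\Ext^{1}(\mathcal{O}_C,\mathcal{O}_C)\cong\Ext^{1}(I_C,I_C)$ via the triangle $\mathcal{O}_C\to I_C[1]\to\mathcal{O}_X[1]$; your families argument is plausible but left as a sketch.

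The genuine gap is in your steps (b)--(c): the rank bookkeeping for the obstruction space is wrong, and the dimensional inconsistency you noticed (``virtual real dimension $=2-2+2\dots$'') is a symptom of that error rather than something to be dismissed by ``case (2) applies verbatim.'' From $0=\chi(v,v)=\ext^{0}-\ext^{1}+\ext^{2}-\ext^{3}+\ext^{4}$ with $\ext^{0}=\ext^{4}=1$ (stable, hence simple) and $\ext^{3}=\ext^{1}=1$ (Serre duality plus smoothness in dimension one), one gets $\ext^{2}=2\ext^{1}-2=0$, not $2\ext^{1}=2$: you dropped the $\ext^{0}+\ext^{4}$ contribution. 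Consequently there is no rank-two obstruction bundle, no real half $\mathrm{Ob}_{+}$, and no Euler class to analyze: the obstruction space $\Ext^{2}(\mathcal{O}_C,\mathcal{O}_C)\cong\Ext^{2}(I_C,I_C)$ vanishes identically, $M_{2l}$ is smooth and unobstructed of the expected dimension, and the virtual class is the fundamental class up to the sign fixed by the orientation. Establishing this vanishing --- via the comparison of $\Ext^{\bullet}(\mathcal{O}_C,\mathcal{O}_C)$ with $\Ext^{\bullet}(I_C,I_C)$ and the fact that $I_1(X,2)$ is smooth of its expected dimension by Proposition \ref{smooth of moduli} --- is the actual content of the second half of the proof. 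As written, your argument would instead output a class $e(\mathrm{Ob}_{+})\cap[M_{2l}]$ of degree zero, which lives in the wrong homological degree and cannot equal $[M_{2l}]\in H_{2}(M_{2l},\mathbb{Z})$.
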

\begin{proof}
By Proposition \ref{not contain double line}, we may assume $X$ contains smooth and broken conics only.

For $E\in M_{2l}$, $\chi(E)=1$ implies $h^{0}(E)\geqslant1$, so there exists a nontrivial section $s:\mathcal{O}_X\to E$.
If the image $\mathrm{Im}(s)\subseteq E$ is a proper subsheaf, then $\chi(\mathrm{Im}(s))\leqslant0$ by the stability of $E$.
Note that $\mathrm{Im}(s)$ is the structure sheaf of some one dimensional subscheme whose fundamental class is $l$ or $2l$, so 
$\chi(\mathrm{Im}(s))\leqslant0$ can not happen. Thus $s$ is surjective and $E\cong \mathcal{O}_C$ for some smooth or broken conic in $X$.
Conversely, when $C$ is smooth, $\mathcal{O}_C$ is obviously stable. 
As for a broken conic $C=l_1+l_2$, to test the stability of $\mathcal{O}_C$, we take a saturated (i.e. $E_2$ is pure) extension
\begin{equation}0\to E_1\to \mathcal{O}_C \to E_2 \to 0, \nonumber \end{equation}
we may assume $\mathrm{supp}(E_i)=l_i$ ($i=1,2$) without loss of generality. Then $E_i\cong \mathcal{O}_{l_i}(a_i)$ for some 
$a_i\in\mathbb{Z}$. From the exact sequence
\begin{equation}0\to \mathcal{O}_{l_1}(-1)\to \mathcal{O}_C \to \mathcal{O}_{l_2} \to 0, \nonumber \end{equation}
we have 
\begin{equation}0\to \Hom(\mathcal{O}_{l_1},\mathcal{O}_{l_1}(-1))\to  \Hom(\mathcal{O}_{l_1},\mathcal{O}_C)\to  \Hom(\mathcal{O}_{l_1}, \mathcal{O}_{l_2})=0. \nonumber \end{equation}
Hence $\Hom(\mathcal{O}_{l_1},\mathcal{O}_C)=0$. So $a_1\leqslant -1$, which implies the stability of $\mathcal{O}_C$. 

So we have a bijective morphism
\begin{equation}\theta: I_1(X,2)\to M_{2l}, \quad I_C\mapsto \mathcal{O}_C. \nonumber \end{equation}
Next, we compare their deformation-obstruction theory. For a conic $C\subseteq X$, there is a distinguished triangle
\begin{equation} \mathcal{O}_C \to I_C[1] \to \mathcal{O}_X[1], \nonumber \end{equation}
which implies the diagram
\begin{align*}
\xymatrix{    &  \dR \Gamma(\mathcal{O}_X)[1] \ar@{=}[r]\ar[d] &
\dR \Gamma(\mathcal{O}_X)[1] \ar[d] \\
\RHom(I_C, \mathcal{O}_C) \ar[r] & \RHom(I_C, I_C)[1] \ar[r] \ar[d] &
\RHom(I_C, \mathcal{O}_X)[1] \ar[d] \\
&  \RHom(I_C, I_C)_0[1]  &  \RHom(\mathcal{O}_C, \mathcal{O}_X)[2], }\end{align*}
where the horizontal and vertical arrows are distinguished triangles.
By taking cones, we obtain a distinguished triangle
\begin{align}\label{pair:tri}
\RHom(I_C, \mathcal{O}_C) \to \RHom(I_C, I_C)_0[1] \to
\RHom(\mathcal{O}_C, \mathcal{O}_X)[2].
\end{align}
Combining with distinguished triangle
\begin{equation}\RHom (\mathcal{O}_C,\mathcal{O}_C)\rightarrow \RHom (\mathcal{O}_X,\mathcal{O}_C)\rightarrow 
\RHom (I_C,\mathcal{O}_C),  \nonumber \end{equation}
and $h^{i\geqslant1}(\oO_C)=0$, we obtain canonical isomorphisms
\begin{equation}\Ext^{1}(\mathcal{O}_C,\mathcal{O}_C)\cong \Ext^{1}(I_C,I_C),  \nonumber \end{equation}
\begin{equation}\Ext^{2}(\mathcal{O}_C,\mathcal{O}_C)\cong \Ext^{2}(I_C,I_C).  \nonumber \end{equation}
By Proposition \ref{smooth of moduli}, for a generic sextic 4-fold $X$, $I_1(X,2)$ is smooth of dimension one. 
So $\Ext^{1}(I_C,I_C)\cong \mathbb{C}$, and $\Ext^{2}(I_C,I_C)=0$ by Riemann-Roch formula. 
Thus $\theta$ is an isomorphism and $M_{2l}$ is smooth of expected dimension. So the $\mathrm{DT}_4$ virtual class of $M_{2l}$ is its 
usual fundamental class for a choice of orientation.
\end{proof}

\subsection{GW invariants and $\mathrm{GW/DT_4}$ conjecture}
As the virtual dimension of the moduli space $\overline{M}_{0, n}(X, \beta)$ of genus zero, $n$-pointed stable maps is $1+n$, we need insertions to define GW invariants. 
For $\gamma\in H^{4}(X,\mathbb{Z})$, the genus 0 Gromov-Witten invariant of $X$ is defined to be 
\begin{equation}
\mathrm{GW}_{0, \beta}(\gamma):=\int_{[\overline{M}_{0, 1}(X, \beta)]^{\rm{vir}}}\mathrm{ev}^{\ast}(\gamma)\in\mathbb{Q},
\nonumber \end{equation}
where $\mathrm{ev} \colon \overline{M}_{0, 1}(X, \beta)\to X$ is the evaluation map.

The following conjecture is proposed in \cite{CMT} as an interpretation of Klemm-Pandharipande's Gopakumar-Vafa type invariants \cite{KP} on CY 4-folds in terms of $\mathrm{DT}_{4}$ invariants of one dimensional stable sheaves.
\begin{conj}$($\cite[Conjecture 1.3]{CMT}$)$\label{conj:GW/GV}
We have the identity
\begin{align*}
\mathrm{GW}_{0, \beta}(\gamma)=
\sum_{k|\beta}\frac{1}{k^{2}}\cdot\mathrm{DT}_{4}(\beta/k \textrm{ }| \textrm{ } \gamma),
\end{align*}
for certain choice of orientation in defining the RHS.
\end{conj}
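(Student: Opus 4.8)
The plan is to compute each of the four invariants in the theorem as an honest intersection number and to match them directly. By the deformation invariance of $\mathrm{DT}_{4}$ and of genus-$0$ Gromov--Witten invariants we may take $X\subseteq\mathbb{P}^{5}$ to be a generic sextic, so that Propositions \ref{smooth of moduli}, \ref{not contain double line} and \ref{virtual class for one dim sheaves} apply: $I_{1}(X,2)$ is a smooth projective curve, $X$ contains only finitely many broken conics and no double lines, and (by the same incidence-variety argument as in Proposition \ref{smooth of moduli}, now using that $H^{0}(\mathbb{P}^{5},\mathcal{O}(6))\to H^{0}(\ell,\mathcal{O}(6))$ is surjective for every line $\ell$) the Fano scheme of lines $I_{1}(X,1)$ is a smooth projective curve whose lines all have normal bundle $\mathcal{O}\oplus\mathcal{O}(-1)^{\oplus 2}$.

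\emph{Degree one.} First I would identify $M_{l}\cong I_{1}(X,1)$: any semistable sheaf with $\ch=(0,0,0,l,1)$ is $\mathcal{O}_{\ell}$ for a unique line $\ell\subseteq X$ and is automatically stable. Similarly $\overline{M}_{0,1}(X,l)$ is the universal line $\mathcal{Z}_{1}\to I_{1}(X,1)$, a smooth projective surface, since a degree-$l$ genus-$0$ stable map has irreducible domain mapping isomorphically onto a line (no boundary strata). Because $H^{1}(\mathbb{P}^{1},f^{\ast}T_{X})=0$ and, arguing as in Proposition \ref{virtual class for one dim sheaves}, $\Ext^{2}(\mathcal{O}_{\ell},\mathcal{O}_{\ell})=0$, both virtual classes equal the usual fundamental class (for the orientation making the $\mathrm{DT}_{4}$ one positive). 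Since the universal sheaf is $\mathcal{O}_{\mathcal{Z}_{1}}$, whose support has codimension $3$, one has $\ch_{3}(\eE)=[\mathcal{Z}_{1}]$, and the projection formula gives
\[
\mathrm{DT}_{4}(l\mid\gamma)=\int_{\mathcal{Z}_{1}}p_{X}^{\ast}\gamma=\int_{\overline{M}_{0,1}(X,l)}\mathrm{ev}^{\ast}\gamma=\mathrm{GW}_{0,l}(\gamma),
\]
which is the first identity (here $p_{X}$ and $\mathrm{ev}$ are the same map $\mathcal{Z}_{1}\to X$).

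\emph{Degree two.} On the $\mathrm{DT}_{4}$ side, $M_{2l}\cong I_{1}(X,2)$ and $[M_{2l}]^{\mathrm{vir}}=[M_{2l}]$ by Proposition \ref{virtual class for one dim sheaves}; writing $\mathcal{W}\to I_{1}(X,2)$ for the universal conic (a reduced surface, since $X$ has no double lines, with $\ch_{3}(\mathcal{O}_{\mathcal{W}})=[\mathcal{W}]$), the same computation gives $\mathrm{DT}_{4}(2l\mid\gamma)=\int_{\mathcal{W}}p_{X}^{\ast}\gamma$. On the GW side, the crucial geometric input is that $\overline{M}:=\overline{M}_{0,1}(X,2l)$ is a disjoint union $\overline{M}^{\mathrm{con}}\sqcup\overline{M}^{\mathrm{line}}$ of closed substacks, according to whether the image of the stable map is a conic (the map birational onto it) or a line (a degree-$2$ cover, possibly with a contracted bubble); disjointness is exactly where the absence of double lines is used, since a family with conic image cannot specialise to a double cover of a line without its image subschemes limiting to a double line in $X$. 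A normal-bundle computation shows $H^{1}(f^{\ast}T_{X})=0$ at every point of $\overline{M}^{\mathrm{con}}$ (including broken and bubbled configurations, the relevant lines again having normal bundle $\mathcal{O}\oplus\mathcal{O}(-1)^{\oplus 2}$), so $\overline{M}^{\mathrm{con}}$ is smooth of dimension $2$ and $[\overline{M}^{\mathrm{con}}]^{\mathrm{vir}}=[\overline{M}^{\mathrm{con}}]$; the image-stabilisation morphism $\overline{M}^{\mathrm{con}}\to\mathcal{W}$ is bijective, hence birational, and $\mathrm{ev}$ factors through $p_{X}\colon\mathcal{W}\to X$, so $\int_{[\overline{M}^{\mathrm{con}}]^{\mathrm{vir}}}\mathrm{ev}^{\ast}\gamma=\int_{\mathcal{W}}p_{X}^{\ast}\gamma=\mathrm{DT}_{4}(2l\mid\gamma)$. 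On the other component, $\overline{M}^{\mathrm{line}}$ is smooth of dimension $4$, fibered over $I_{1}(X,1)$ with fibre $\overline{M}_{0,1}(\mathbb{P}^{1},2)$, and its virtual class is $e(\mathrm{Ob})\cap[\overline{M}^{\mathrm{line}}]$ for the rank-$2$ excess obstruction bundle $\mathrm{Ob}=R^{1}\pi_{\ast}(f^{\ast}\mathcal{N})$, where $\mathcal{N}\cong\mathcal{O}(-1)^{\oplus 2}$ is the negative part of the normal bundle of a line. As $\mathrm{ev}$ on $\overline{M}^{\mathrm{line}}$ factors through $\mathcal{Z}_{1}$ by a morphism with $2$-dimensional fibres, the projection formula reduces this contribution to $c\cdot\int_{\mathcal{Z}_{1}}p_{X}^{\ast}\gamma$, where $c=\int_{\overline{M}_{0,1}(\mathbb{P}^{1},2)}\mathrm{ev}^{\ast}[\mathrm{pt}]\cup e(\mathrm{Ob})$ is a purely local multiple-cover number. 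I would compute $c=\tfrac{1}{4}$ by $\mathbb{C}^{\ast}$-localisation on $\overline{M}_{0,1}(\mathbb{P}^{1},2)$ (this is the $\tfrac{1}{k^{2}}$ with $k=2$ of the genus-$0$ Gopakumar--Vafa formula, cf. \cite{KP}), so that $\int_{[\overline{M}^{\mathrm{line}}]^{\mathrm{vir}}}\mathrm{ev}^{\ast}\gamma=\tfrac{1}{4}\,\mathrm{DT}_{4}(l\mid\gamma)$. Adding the two contributions via $[\overline{M}]^{\mathrm{vir}}=[\overline{M}^{\mathrm{con}}]^{\mathrm{vir}}+[\overline{M}^{\mathrm{line}}]^{\mathrm{vir}}$ yields the second identity.

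\emph{Main obstacle.} The two delicate parts are (i) the geometry of $\overline{M}_{0,1}(X,2l)$: showing there are exactly these two components, that $\overline{M}^{\mathrm{con}}$ is smooth and $\overline{M}^{\mathrm{con}}\to\mathcal{W}$ is a genuine birational morphism, and that the two components are disjoint so the virtual class splits --- all of which rest on the ``no double lines'' statement; and (ii) the local computation $c=\tfrac{1}{4}$ on $\overline{M}_{0,1}(\mathbb{P}^{1},2)$, where one must account carefully for the boundary strata and the $\mathbb{Z}/2$-automorphisms of the double covers, which are responsible for the factor $\tfrac{1}{2^{2}}$ rather than $\tfrac{1}{2}$.
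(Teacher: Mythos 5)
Your proposal is correct and follows essentially the same route as the paper's proof of Theorem \ref{GW/DT4}: reduce to a generic sextic, identify $M_{2l}\cong I_{1}(X,2)$ with virtual class the fundamental class, split $\overline{M}_{0,1}(X,2l)$ into the embedded-conic and line-double-cover components, match the first with $\mathrm{DT}_{4}(2l\mid\gamma)$ via the universal conic, and evaluate the second as $\tfrac14\,\mathrm{DT}_{4}(l\mid\gamma)$ using the rank-two excess obstruction bundle and the multiple-cover number. The only cosmetic differences are that the paper writes the cover component as a product $\overline{M}_{0,1}(\mathbb{P}^{1},2)\times\overline{M}_{0,0}(X,l)$ rather than a fibration and obtains your constant $c=\tfrac14$ by citing the Aspinwall--Morrison formula (as $2\cdot\tfrac{1}{2^{3}}$) instead of localisation.
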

We verify this conjecture for a smooth sextic 4-fold $X$ and degree one and two classes in $H_2(X,\mathbb{Z})\cong\mathbb{Z}$.
\begin{thm}\label{GW/DT4}
Conjecture \ref{conj:GW/GV} is true for degree one and two classes of a smooth sextic 4-fold $X\subseteq \mathbb{P}^{5}$, i.e.
for the line class $l\in H_2(X,\mathbb{Z})$ and any $\gamma\in H^{4}(X)$, 
we have 
\begin{equation}\mathrm{GW}_{0, l}(\gamma)=\mathrm{DT}_{4}(l \textrm{ }| \textrm{ } \gamma), \nonumber \end{equation} 
\begin{equation}\mathrm{GW}_{0, 2l}(\gamma)=\mathrm{DT}_{4}(2l \textrm{ }| \textrm{ } \gamma)+
\frac{1}{4}\cdot\mathrm{DT}_{4}(l\textrm{ }| \textrm{ } \gamma),  \nonumber \end{equation} 
for certain choice of orientation in defining the RHS.
\end{thm}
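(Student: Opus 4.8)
The plan is to compute both sides of each identity explicitly and check equality. For the degree one case, the line class $l$ is primitive, so Conjecture \ref{conj:GW/GV} reduces to the single identity $\mathrm{GW}_{0,l}(\gamma)=\mathrm{DT}_4(l \mid \gamma)$. The first task is to describe the Fano scheme of lines $F(X)\cong I_1(X,1)$ on a generic sextic $X$: by a dimension count parallel to Propositions \ref{smooth of moduli} and \ref{not contain double line} (the relevant bundle on $\mathrm{Gr}(2,6)$ being $\mathrm{Sym}^6(S^*)$ of rank $7$ over the $8$-dimensional Grassmannian, so the zero locus has dimension one), $F(X)$ is a smooth projective curve, and as with Proposition \ref{virtual class for one dim sheaves} the moduli space $M_l$ of stable sheaves is isomorphic to $F(X)$ with $[M_l]^{\mathrm{vir}}=[M_l]$ for a suitable orientation. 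Then $\mathrm{DT}_4(l\mid\gamma)=\int_{F(X)}\tau(\gamma)$ is computed via the Hilbert-scheme description: push forward $\pi_X^*\gamma\cup\ch_3(\eE)$ along $X\times M_l\to M_l$, identify it with the Euler-class pairing on $\mathrm{Hilb}(\mathbb{P}^5,t+1)=\mathrm{Gr}(2,6)$ (using the line-scheme analogue of Lemma \ref{hilb scheme of conics}), and evaluate a Chern-class integral on the Grassmannian. On the GW side, since a line is rigid and embedded with no multiple covers contributing in genus zero to this class for generic $X$, $\overline{M}_{0,1}(X,l)$ is the projectivized universal line $\mathbb{P}(S)|_{F(X)}$, smooth of the expected dimension, so $\mathrm{GW}_{0,l}(\gamma)=\int \mathrm{ev}^*\gamma$ reduces to the same Grassmannian integral; matching the two Schubert-calculus computations gives the first identity.

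**The degree two case.** Here $2l$ is divisible, so the conjecture predicts $\mathrm{GW}_{0,2l}(\gamma)=\mathrm{DT}_4(2l\mid\gamma)+\tfrac14\mathrm{DT}_4(l\mid\gamma)$. The term $\mathrm{DT}_4(2l\mid\gamma)=\int_{I_1(X,2)}\tau(\gamma)$ is computed exactly as above using Lemma \ref{hilb scheme of conics}: the Euler-class identity of Proposition \ref{smooth of moduli} turns it into $\int_{\mathrm{Hilb}(\mathbb{P}^5,2t+1)} (\text{Euler class of }\eE)\cup(\text{the class representing }\tau(\gamma))$, a Chern-number computation on the $\mathbb{P}^5$-bundle $\mathbb{P}(\mathrm{Sym}^2(S^*))$ over $\mathrm{Gr}(3,6)$, doable by the projection formula plus Schubert calculus. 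The main work is the GW side: I would show that for generic $X$ the moduli space $\overline{M}_{0,1}(X,2l)$ has exactly two irreducible components — one, call it $M^{\mathrm{conic}}$, generically parametrizing a point on an embedded smooth or broken conic, and the other, $M^{\mathrm{d.c.}}$, parametrizing degree-two covers $\mathbb{P}^1\to l\subseteq X$ of a line together with a marked point. One checks (again via a deformation/generic-smoothness argument as in Proposition \ref{not contain double line}, now at the level of stable maps) that each component is smooth of the expected dimension $3$ and that they meet transversally in the expected locus, so that $[\overline{M}_{0,1}(X,2l)]^{\mathrm{vir}}=[M^{\mathrm{conic}}]+[M^{\mathrm{d.c.}}]$ and the GW integral splits as a sum $\int_{M^{\mathrm{conic}}}\mathrm{ev}^*\gamma+\int_{M^{\mathrm{d.c.}}}\mathrm{ev}^*\gamma$. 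One then identifies $\int_{M^{\mathrm{conic}}}\mathrm{ev}^*\gamma$ with $\mathrm{DT}_4(2l\mid\gamma)$ — both are, up to the verification, the integral of the pulled-back $\gamma$ over the same universal conic $\mathcal{C}\to I_1(X,2)$ — and computes the double-cover contribution $\int_{M^{\mathrm{d.c.}}}\mathrm{ev}^*\gamma$ directly: $M^{\mathrm{d.c.}}$ fibers over $F(X)\times$(the space of degree-two maps with one marked point), the $\mathrm{ev}$-pullback only sees $F(X)$ and the marked point's image line, and the fiber integral over the space of genus-zero degree-two covers of $\mathbb{P}^1$ produces the universal rational factor $\tfrac14=\tfrac1{2^2}$; thus $\int_{M^{\mathrm{d.c.}}}\mathrm{ev}^*\gamma=\tfrac14\int_{F(X)}\mathrm{ev}^*\gamma=\tfrac14\mathrm{DT}_4(l\mid\gamma)$ by the degree-one case. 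Adding the two pieces yields the claimed formula.

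**Expected main obstacle.** The routine parts are the Schubert-calculus Chern-number evaluations and the line-scheme analogues of the lemmas already proved for conics. The delicate point is the structure of $\overline{M}_{0,1}(X,2l)$: one must prove that for generic $X$ it is exactly the union of the embedded-conic component and the double-cover-of-a-line component — in particular that there are no stable maps whose image is a double line inside $X$ (which is where Proposition \ref{not contain double line} is essential), that both components are reduced and of pure expected dimension with a well-understood obstruction theory so the virtual class is literally $[M^{\mathrm{conic}}]+[M^{\mathrm{d.c.}}]$, and that the identification $\int_{M^{\mathrm{conic}}}\mathrm{ev}^*\gamma=\mathrm{DT}_4(2l\mid\gamma)$ holds on the nose (matching orientations, and matching the universal sheaf insertion $\ch_3(\eE)$ with the fundamental class of the universal curve). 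Pinning down the excess/obstruction bundle on $M^{\mathrm{d.c.}}$ — equivalently, showing its contribution is unobstructed of the expected dimension so the naive fiber integral is correct and really produces $\tfrac1{k^2}$ with $k=2$ — is the step I expect to require the most care.
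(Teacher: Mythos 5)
Your proposal follows essentially the same route as the paper: decompose $\overline{M}_{0,1}(X,2l)$ into the embedded-conic component and the double-cover-of-lines component, identify the first contribution with $\mathrm{DT}_{4}(2l\mid\gamma)$ via the universal conic in $I_1(X,2)\times X$, and extract the factor $\tfrac14$ from the multiple-cover component, with Proposition \ref{not contain double line} ruling out double lines exactly as you anticipate. Two small corrections to your picture: for generic $X$ the two components are in fact disjoint (so no transversality analysis of their intersection is needed), and the cover component is not of expected dimension --- it is the four-dimensional product $\overline{M}_{0,1}(\mathbb{P}^{1},2l)\times\overline{M}_{0,0}(X,l)$ carrying a rank-two obstruction bundle whose Euler class, via the Aspinwall--Morrison formula, produces $2\cdot\tfrac{1}{2^{3}}=\tfrac14$.
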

\begin{proof}
We only prove for the degree two curve class as the proof for line class is a simpler version of the same approach. 
As all invariants involved are deformation invariant, we could assume the sextic 4-fold $X$ to be generic so that the space $I_1(X,2)$ of 
conics in $X$ is smooth of dimension one and consists of smooth conics and at most a finite number of broken lines 
(as in Proposition \ref{smooth of moduli}, \ref{not contain double line}). 

The moduli space $\overline{M}_{0, k}(X, 2l)$ of $k$-pointed stable maps is the disjoint union of two connected components 
$\overline{M}_{0, k}(X, 2l)_{\mathrm{emb}}$, $\overline{M}_{0, k}(X, 2l)_{\mathrm{cov}}$, which parametrizes the embedding of smooth or broken conics into $X$ and double cover from $\mathbb{P}^{1}$ to lines in $X$ respectively.

We have a forgetful map
\begin{equation}\phi:\overline{M}_{0, 1}(X, 2l)_{\mathrm{emb}} \rightarrow \overline{M}_{0, 0}(X, 2l)_{\mathrm{emb}}\cong I_1(X,2),  \nonumber \end{equation}
\begin{equation}\phi:(f:C\rightarrow X, \textrm{ }p\in C)\mapsto (f:C\rightarrow X)  \mapsto  I_{f(C)},  \nonumber \end{equation}
and a natural embedding 
\begin{equation}i=(\phi,\ev): \overline{M}_{0, 1}(X, 2l)_{\mathrm{emb}}\hookrightarrow  I_1(X,2)\times X,  \nonumber \end{equation} 
\begin{equation}i(f:C\rightarrow X, \textrm{ }p\in C)=(I_{f(C)},f(P)) , \nonumber \end{equation}
whose image is the universal curve $\mathcal{Z}\subseteq I_1(X, 2)\times X$. Note that $\mathcal{Z}$ is an irreducible variety 
of dimension 4, so the virtual class of $\overline{M}_{0, 1}(X, 2l)_{\mathrm{emb}}$ is its usual fundamental class.

For $\gamma\in H^{4}(X)$, we have 
\begin{eqnarray*}
\int_{[\overline{M}_{0, 1}(X, 2l)_{\mathrm{emb}}]^{\mathrm{vir}}}\ev^{*}(\gamma)&=&\int_{[\overline{M}_{0, 1}(X, 2l)_{\mathrm{emb}}]}i^{*}(1\cup\gamma) \\
&=& \int_{i_*[\overline{M}_{0, 1}(X, 2l)_{\mathrm{emb}}]}\gamma  \\ 
&=& \int_{[I_1(X, 2)\times X]}(\mathrm{PD}([\mathcal{Z}])\cup\gamma)  \\
&=& \mathrm{DT}_{4}(2l \textrm{ }| \textrm{ } \gamma), 
\end{eqnarray*}
where the last equality is by Proposition \ref{virtual class for one dim sheaves}.

As for component $\overline{M}_{0, 1}(X, 2l)_{\mathrm{cov}}$, it can be identified as 
$\overline{M}_{0, 1}(\mathbb{P}^{1}, 2l)\times\overline{M}_{0, 0}(X, l)$ by
\begin{equation}\label{ident}\overline{M}_{0, 1}(\mathbb{P}^{1}, 2l)\times\overline{M}_{0, 0}(X, l)\cong \overline{M}_{0, 1}(X, 2l)_{\mathrm{cov}},\end{equation}
\begin{equation}(t:\mathbb{P}^{1}\rightarrow\mathbb{P}^{1},\textrm{ }P\in\mathbb{P}^{1}; \textrm{ }f:\mathbb{P}^{1}\rightarrow X)\mapsto (f\circ t: \mathbb{P}^{1}\rightarrow X, \textrm{ }P\in\mathbb{P}^{1}).   \nonumber \end{equation}
Note that $\overline{M}_{0, 1}(\mathbb{P}^{1}, 2l)$ is smooth of dimension 3.
For a generic sextic 4-fold $X$, lines in $X$ have normal bundle $\mathcal{O}_{\mathbb{P}^{1}}(-1,-1,0)$ 
and the moduli space $\overline{M}_{0, 0}(X, l)\cong I_1(X,1)$ is isomorphic to the (smooth) Fano scheme of lines 
(see \cite[Thm. 4.3, pp. 266 and Ex. 4.5 pp. 269] {Kollar}), which is one dimensional.
The obstruction space of $\overline{M}_{0, 1}(X, 2l)_{\mathrm{cov}}$ at $g:\mathbb{P}^{1}\rightarrow X$ is 
\begin{equation}H^{1}(\mathbb{P}^{1},g^{*}TX)\cong H^{1}(\mathbb{P}^{1},g^{*}\mathcal{O}_{\mathbb{P}^{1}}(-1,-1,0))
\cong H^{1}(\mathbb{P}^{1},g^{*}\mathcal{O}_{\mathbb{P}^{1}}(-1,-1)).
\nonumber \end{equation}
As $g$ varies, $H^{1}(\mathbb{P}^{1},g^{*}\mathcal{O}_{\mathbb{P}^{1}}(-1,-1))$ forms a rank two bundle which is the pull-back of an `obstruction' bundle $Ob\rightarrow \overline{M}_{0, 1}(\mathbb{P}^{1}, 2l)$ (its fiber over $f:\mathbb{P}^{1}\to \mathbb{P}^{1}$ is 
$H^{1}(\mathbb{P}^{1},f^{*}\mathcal{O}_{\mathbb{P}^{1}}(-1,-1))$). Hence under the isomorphism (\ref{ident}), the virtual class satisfies
\begin{equation}[\overline{M}_{0, 1}(X, 2l)_{\mathrm{cov}}]^{\mathrm{vir}}=\mathrm{PD}(e(Ob))\otimes [\overline{M}_{0, 0}(X, l)]\in 
H_2(\overline{M}_{0, 1}(\mathbb{P}^{1}, 2l))\otimes H_2(\overline{M}_{0, 0}(X, l)). \nonumber \end{equation}
We define 
\begin{equation}\pi: \overline{M}_{0, 1}(\mathbb{P}^{1}, 2l)\times \overline{M}_{0, 0}(X, l)
\rightarrow \overline{M}_{0, 0}(\mathbb{P}^{1}, 2l)\times \overline{M}_{0, 0}(X, l)\times X,
\nonumber \end{equation}
\begin{equation}\pi\big(t:\mathbb{P}^{1}\rightarrow\mathbb{P}^{1},\textrm{ }P\in \mathbb{P}^{1},\textrm{ }f:\mathbb{P}^{1}\rightarrow X\big)=
\big(t:\mathbb{P}^{1}\rightarrow\mathbb{P}^{1},\textrm{ }f:\mathbb{P}^{1}\rightarrow X,\textrm{ }f\circ t(P)\big).   \nonumber \end{equation}
By base change (e.g. \cite[pp. 182]{CK}), the obstruction bundle $Ob\rightarrow\overline{M}_{0, 1}(\mathbb{P}^{1}, 2l)$ is the pullback of an obstruction bundle $Ob\rightarrow \overline{M}_{0, 0}(\mathbb{P}^{1}, 2l)$ via the forgetful map $F:\overline{M}_{0, 1}(\mathbb{P}^{1}, 2l)\rightarrow \overline{M}_{0, 0}(\mathbb{P}^{1}, 2l)$. 
Then for $\gamma\in H^{4}(X)$, we have 
\begin{eqnarray*}
\int_{[\overline{M}_{0, 1}(X, 2l)_{\mathrm{cov}}]^{\mathrm{vir}}}\ev^{*}(\gamma)&=&\int_{[\overline{M}_{0, 1}(\mathbb{P}^{1}, 2l)\times\overline{M}_{0, 0}(X, l)]}\pi^{*}\big(e(Ob)\cup\gamma\big) \\
&=& \int_{\pi_*[\overline{M}_{0, 1}(\mathbb{P}^{1}, 2l)\times\overline{M}_{0, 0}(X, l)]}e(Ob)\cup\gamma \\ 
&=& \bigg(\int_{F_*[\overline{M}_{0, 1}(\mathbb{P}^{1}, 2l)]}e\big(Ob\big)\bigg)\cdot\bigg(\int_{2[\mathcal{C}]}\gamma\bigg)  \\
&=& 2\int_{[\overline{M}_{0, 0}(\mathbb{P}^{1}, 2l)]}e\big(Ob\big)\cdot\int_{[\overline{M}_{0, 0}(X, l)\times X]}\big(\mathrm{PD}([\mathcal{C}]\big)\cup\gamma)  \\
&=& 2\cdot\frac{1}{2^{3}}\cdot\mathrm{DT}_{4}(l \textrm{ }| \textrm{ } \gamma), 
\end{eqnarray*}
where $\mathcal{C}\subseteq\overline{M}_{0, 0}(X, l)\times X$ is the universal line under the identification $\overline{M}_{0, 0}(X, l)\cong I_1(X,1)$, and
the last equality is by the Aspinwall-Morrison formula (e.g. \cite[Lemma 27.5.3, pp. 547]{HKKPTVVZ}) and identification of virtual classes
\begin{equation}[M_{l}]^{\mathrm{vir}}=[I_1(X,1)]^{\mathrm{vir}}=[I_1(X,1)]=[\overline{M}_{0, 0}(X, l)]^{\mathrm{vir}}=[\overline{M}_{0, 0}(X, l)], 
\nonumber \end{equation} 
which can be obtained by a similar argument as in Proposition \ref{virtual class for one dim sheaves}.

To sum up, we obtain 
\begin{eqnarray*}
\mathrm{GW}_{0, 2l}(\gamma)&=& \int_{[\overline{M}_{0, 1}(X, 2l)_{\mathrm{emb}}]^{\mathrm{vir}}}\ev^{*}(\gamma)+\int_{[\overline{M}_{0, 1}(X, 2l)_{\mathrm{cov}}]^{\mathrm{vir}}}\ev^{*}(\gamma)  \\
&=& \mathrm{DT}_{4}(2l \textrm{ }| \textrm{ } \gamma)+\frac{1}{4}\cdot\mathrm{DT}_{4}(l\textrm{ }| \textrm{ } \gamma),
\end{eqnarray*}
i.e. Conjecture \ref{conj:GW/GV} is true for degree two class.
\end{proof}
\begin{cor-defi}
Let $X\subseteq \mathbb{P}^{5}$ be a smooth sextic 4-fold and $H$ be its hyperplane class. 
Then the number of lines, conics incident to 4-cycle $H^{2}$ is  
\begin{equation}\mathrm{DT}_{4}(l \textrm{ }| \textrm{ } H^{2})=60480, \quad
\mathrm{DT}_{4}(2l \textrm{ }| \textrm{ } H^{2})=440884080, \nonumber \end{equation} 
for certain choice of orientation in defining the LHS.
\end{cor-defi}
\begin{proof}
By Theorem \ref{GW/DT4}, for certain choice of orientation, 
we have $\mathrm{DT}_{4}(kl\textrm{ }| \textrm{ } \gamma)=n_{0,kl}(\gamma)$ for $k=1,2$, where $n_{0,kl}(\gamma)$ are 
Klemm-Pandharipande's genus zero GV type invariants defined using multiple cover formula and GW invariants. $n_{0,kl}(\gamma)$ are computed
in \cite[Table 2, pp. 33]{KP} by Picard-Fuchs equations and mirror principle of Lian-Liu-Yau and Givental. 
\end{proof}

${}$ \\
\textbf{Acknowledgement.} The author is grateful to Yukinobu Toda for useful discussions.
The work is supported by The Royal Society Newton International Fellowship.

\end{document}